\numberwithin{equation}{section}
\numberwithin{figure}{section}
\theoremstyle{plain}
\newtheorem{theorem}{Theorem}[section]
\newtheorem{lemma}[theorem]{Lemma}
\newtheorem{conjecture}[theorem]{Conjecture}
\newtheorem{corollary}[theorem]{Corollary}
\newtheorem{proposition}[theorem]{Proposition}
\theoremstyle{definition}
\newtheorem{definition}[theorem]{Definition}
\newtheorem{example}[theorem]{Example}
\newtheorem{problem}[theorem]{Problem}
\newcommand{\mdim}{\mathrm{mdim}}
\newcommand{\Diam}{\mathrm{diam}}
\newcommand{\widim}{\mathrm{Widim}}
\newcommand{\dist}{\mathrm{dist}}
\newcommand{\supp}{\mathrm{supp}}
\newcommand{\norm}[1]{\left|\!\left|#1\right|\!\right|}
\begin{document}

\title[Mean dimension and a sharp embedding theorem]{Mean dimension and a sharp embedding theorem: extensions of aperiodic subshifts}

\author{Yonatan Gutman \& Masaki Tsukamoto}

\subjclass[2010]{37B99, 54F45}

\keywords{Mean dimension, aperiodic, subshift, zero dimensional}

\date{\today}

\maketitle

\begin{abstract}
We show that if $(X,T)$ is an extension of an aperiodic subshift (a subsystem
of $\left(\{1,2,\dots,l\}^{\mathbb{Z}},\mathrm{shift}\right)$ for some $l\in\mathbb{N}$) and
has mean dimension $mdim(X,T)<\frac{D}{2}$ $(D\in \mathbb{N}$), then it embeds equivariantly
in $\left(\left([0,1]^{D}\right)^{\mathbb{Z}},\mathrm{shift}\right)$.
The result is sharp. If $(X,T)$ is an extension of an aperiodic zero-dimensional system then it embeds equivariantly
in $\left(\left([0,1]^{D+1}\right)^{\mathbb{Z}},\mathrm{shift}\right)$.
\end{abstract}

\section{Introduction}

\label{section: introduction} Let $(X,T)$ be a topological dynamical
system (t.d.s),i.e., $X$ is a compact metric space and $T:X\to X$
is a homeomorphism. Let $D$ be a positive integer, and let $[0,1]^{D}$
be the $D$-dimensional unit cube. Let $\left(\left([0,1]^{D}\right)^{\mathbb{Z}},\sigma\right)$
be the shift on $[0,1]^{D}$ i.e., $\sigma(x)_{n}\triangleq x_{n+1}$
for $x=(x_{n})_{n\in\mathbb{Z}}\in\left([0,1]^{D}\right)^{\mathbb{Z}}$.
We study the following problem in this paper: \begin{problem}\label{main problem}
When does there exist an embedding $\phi:(X,T)\to\left(\left([0,1]^{D}\right)^{\mathbb{Z}},\sigma\right)$?
Here $\phi$ is called an (equivariant) embedding if $\phi$ is a
topological embedding and $\phi T=\sigma\phi$. \end{problem} Jaworski
\cite{J74} proved that if $(X,T)$ is a finite dimensional system
and is aperiodic (i.e., it has no periodic points) then $(X,T)$ can
be embedded into the system $\left([0,1]^{\mathbb{Z}},\sigma\right)$
(See also Auslander \cite[Chapter 13, Theorem 9]{A}). Clearly if
a system has periodic points, this may constitute an obstruction to
embedding into $\left(\left([0,1]^{D}\right)^{\mathbb{Z}},\sigma\right)$.
To quantify this introduce an invariant called \textit{periodic dimension
$perdim(X,T)=\sup_{n\in\mathbb{N}}\frac{dim(P_{n})}{n}$,} where $P_{n}$
denotes the set of points of period $\leq n$. One readily checks
that a necessary condition for embedding into $\left(\left([0,1]^{D}\right)^{\mathbb{Z}},\sigma\right)$
is $perdim(X,T)\leq D$. In \cite{Gut12a} it is shown that if $(X,T)$
is a finite dimensional and $perdim(X,T)<\frac{D}{2}$, then $(X,T)$
can be embedded into $\left(\left([0,1]^{D}\right)^{\mathbb{Z}},\sigma\right)$.

For infinite dimensional systems the situation is very different.
Lindenstrauss-Weiss \cite[Propositon 3.3]{LW} constructed a minimal
infinite dimensional system which cannot be embedded into the system
$\left([0,1]^{\mathbb{Z}},\sigma\right)$ by using the theory of mean
dimension.

Mean dimension (denoted by $\mdim(X,T)$) is a topological invariant
of dynamical systems introduced by Gromov \cite{G} and systematically
investigated in \cite{LW}. The explicit formula appears at the end
of the Preliminaries Section. The mean dimension of the system $\left(\left([0,1]^{D}\right)^{\mathbb{Z}},\sigma\right)$
is equal to $D$. If a system $(X,T)$ can be embedded into another
system $(X',T')$, then $\mdim(X,T)\leq\mdim(X',T')$. So if $(X,T)$
can be embedded into $\left(\left([0,1]^{D}\right)^{\mathbb{Z}},\sigma\right)$
then we must have $\mdim(X,T)\leq D$. Hence the mean dimension is
also an obstruction to embedding a system into $\left(\left([0,1]^{D}\right)^{\mathbb{Z}},\sigma\right)$.

Surprisingly Lindenstrauss \cite[Theorem 5.1]{L99} proved the following
partial converse: \begin{theorem}\label{theorem of Lindenstrauss}
If $(X,T)$ is an extension of an aperiodic minimal system and satisfies
$\mdim(X,T)<D/36$ then $(X,T)$ can be embedded into the system $\left(\left([0,1]^{D}\right)^{\mathbb{Z}},\sigma\right)$.
\end{theorem}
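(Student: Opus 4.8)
The plan is to look for the embedding in the standard form $\Phi=I_f$, where $f\colon X\to[0,1]^D$ is continuous and $I_f(x)=\bigl(f(T^nx)\bigr)_{n\in\mathbb{Z}}$; such a $\Phi$ is automatically equivariant, and since $X$ is compact it is a topological embedding as soon as it is injective. So the whole task is to produce a single $f$ for which $I_f$ separates points, and I would phrase this as a Baire category statement in $C\bigl(X,[0,1]^D\bigr)$ with the supremum metric. Writing $X\times X\setminus\Delta=\bigcup_{m}K_m$ as a countable union of compact sets $K_m$ bounded away from the diagonal $\Delta$, the sets $\mathcal{U}_m=\{\,f\ :\ I_f(x)\neq I_f(x')\ \text{for all}\ (x,x')\in K_m\,\}$ are open in $C\bigl(X,[0,1]^D\bigr)$, so it is enough to prove each $\mathcal{U}_m$ dense; then $I_f$ is injective for a residual, hence nonempty, set of $f$. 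The dynamics and the mean dimension hypothesis enter only in proving density.

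For density of $\mathcal{U}_m$ I would use the aperiodic minimal factor $\pi\colon(X,T)\to(Z,S)$ to build, for every large $N$, a \emph{marker}/tower structure on $X$: a finite family of subsets pulled back from $Z$ whose return times are of order $N$, together with a subordinate partition of unity. Aperiodicity of $(Z,S)$ is exactly what makes such data available with tower heights tending to infinity, and it lets one localise a perturbation of a given $f$ to orbit-windows of length $\sim N$. On one window the free part of $f$ is a map from an $N$-orbit-block of $X$ into $\bigl([0,1]^D\bigr)^{N}$, and forcing $I_f$ to separate a prescribed pair on that window is a finite-dimensional general-position problem of Menger--N\"obeling type: a generic such map is injective (up to the relevant scale) provided the covering dimension of the source is below half of $DN$. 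Here is where $\mdim(X,T)<D/36$ is used: through the $\widim_\varepsilon$ description of mean dimension it guarantees that the $N$-block dimension budget is a definite fraction of $DN$, and the gap between $\frac{D}{2}$ and $\frac{D}{36}$ is the price of working with a merely minimal (as opposed to zero-dimensional) factor, so that the cutting of $f$ at tower boundaries and the overlaps of the partition of unity must be absorbed by extra coordinates.

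In order, the steps would be: (i) set up $I_f$, compactness, and the reduction to density of the $\mathcal{U}_m$; (ii) state and pull back through $\pi$ the marker lemma for aperiodic minimal systems; (iii) for fixed $m$ and a given $f$, choose $N$ large in terms of the modulus of continuity of $f$ and of $K_m$, and rewrite the admissible perturbations of $f$ as data on finitely many $N$-windows; (iv) apply the finite-dimensional general-position lemma on each window, feeding in the $\widim_\varepsilon$-estimate that follows from $\mdim(X,T)<D/36$, to obtain a perturbation that separates $K_m$ up to scale $\varepsilon$, and then remove the scale by a further Baire intersection over $\varepsilon\to0$; (v) conclude that $\bigcap_m\mathcal{U}_m\neq\emptyset$.

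The main obstacle is step (iv) together with the uniformity lurking in step (iii). One must handle pairs $(x,x')$ that are very close or that sit on a common orbit, where a sup-small perturbation of $f$ barely moves $I_f$, and one must run the general-position argument simultaneously and compatibly over all windows and across tower boundaries. This is precisely the spot where the absence of a clopen tower base bites: $f$ cannot be cut cleanly at a boundary, the partition of unity forces overlaps, and absorbing them is what both produces the $\widim_\varepsilon$ bookkeeping and wastes the constant. Replacing the fuzzy towers by genuine clopen Kakutani--Rokhlin towers --- available exactly when the aperiodic factor is zero-dimensional, e.g.\ a subshift --- makes the boundary terms disappear and restores the full transversality budget $DN$, which is what would push the threshold down to the sharp $\mdim(X,T)<\frac{D}{2}$.
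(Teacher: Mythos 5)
This theorem is not proved in the paper at all: it is quoted from Lindenstrauss \cite[Theorem 5.1]{L99}, and the paper's own technical work (Theorem \ref{thm: main theorem}) treats the zero-dimensional-factor case, where the genuinely clopen tower of Lemma \ref{lemma: tower lemma} makes the argument clean and yields the sharp constant $D/2$ --- exactly the contrast you describe in your closing paragraph. Your outline does reproduce the known architecture of Lindenstrauss's proof (Baire category in $C(X,[0,1]^D)$, a marker/tower structure pulled back from the aperiodic minimal factor, a finite-dimensional general-position perturbation on orbit windows fed by a $\widim_\varepsilon$ estimate), so the strategy is the right one.

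As a proof, however, it has a genuine gap, and it is exactly the step you yourself flag as ``the main obstacle'': step (iv) is never carried out. The entire technical content of \cite{L99} lives there. The Rokhlin-type lemma for extensions of aperiodic minimal systems (\cite[Lemma 3.3]{L99}) gives only a ``fuzzy'' tower with continuous, overlapping transition functions, and one must then run the general-position perturbation simultaneously over all windows, control pairs lying in the same column, in adjacent columns, and far apart along a common orbit, and check that the losses at tower boundaries and in the partition-of-unity overlaps still leave a positive transversality budget; it is precisely this bookkeeping that produces the constant $1/36$, which your argument never derives --- you only assert that the gap between $D/2$ and $D/36$ is ``the price'' of minimality. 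Without that quantitative work the proposal is a plan, not a proof. Two smaller points: the reduction is cleaner if run, as in Section \ref{section: proof of the main theorem}, through the open sets $A(\eta)$ of maps $f$ for which $I_f$ is an $\eta$-embedding (density of each $A(\eta)$ being the single perturbation proposition), rather than through your sets $\mathcal{U}_m$ followed by ``a further Baire intersection over $\varepsilon$'', which conflates separating pairs off the diagonal with the $\varepsilon$-embedding property that the general-position step (Lemma \ref{lemma: existence of good approxiamtion}) actually delivers; and in that formulation very close pairs are not a difficulty at all, since an $\eta$-embedding is not required to separate them.
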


In \cite{Gut12b} it is shown one can replace the assumption of an
aperiodic minimal system with the assumption of an aperiodic factor
with a countable number of minimal subsystems or an aperiodic finite
dimensional factor. In \cite{Gut12a} the following theorem is proven:
\begin{theorem} If $(X,T)$ is an extension of an aperiodic finite dimensional
system and satisfies $\mdim(X,T)<D/16$ then $(X,T)$ can be embedded
into the system $\left(\left([0,1]^{D+1}\right)^{\mathbb{Z}},\sigma\right)$.
\end{theorem}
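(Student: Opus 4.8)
My plan is to follow the Baire category method of Lindenstrauss, as refined by Gutman. Let $\pi:(X,T)\to(Z,S)$ be the given factor map onto an aperiodic finite-dimensional system. Since $(Z,S)$ is aperiodic and finite-dimensional, Jaworski's theorem (quoted above) gives $h\in C(Z,[0,1])$ for which $z\mapsto\bigl(h(S^{n}z)\bigr)_{n\in\mathbb{Z}}$ embeds $(Z,S)$ into $\left([0,1]^{\mathbb{Z}},\sigma\right)$. I would reserve the last of the $D+1$ target coordinates for $h\circ\pi$, so that base-direction separation is automatic, and then it remains to construct $g\in C(X,[0,1]^{D})$ so that $I(x)=\bigl((g(T^{n}x),h(S^{n}\pi x))\bigr)_{n\in\mathbb{Z}}$ is injective; by compactness of $X$ this makes $I$ an equivariant embedding. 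A standard argument shows $\{g\in C(X,[0,1]^{D}):I\text{ is injective}\}$ is a countable intersection of open subsets of $C(X,[0,1]^{D})$, each of which the approximation property below shows to be dense; so by the Baire category theorem it suffices to prove: for every $\eta,\delta>0$ and $g^{0}\in C(X,[0,1]^{D})$ there is $g$ with $\norm{g-g^{0}}_{\infty}<\delta$ and $I(x)\ne I(y)$ for all $x,y\in X$ with $d(x,y)\ge\eta$ and $\pi x=\pi y$.

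To prove this, fix $g^{0},\eta,\delta$, pick a scale $\epsilon>0$ small against $\eta$ and the modulus of continuity of $g^{0}$, and then $N$ large. Three ingredients are assembled on a window of length $N$. First, aperiodicity of the base: the marker lemma for aperiodic systems gives an open $V\subseteq Z$ with $V,SV,\dots,S^{N-1}V$ pairwise disjoint and $\bigcup_{n\in\mathbb{Z}}S^{n}V=Z$, which pulled back by $\pi$ cuts (the bulk of) every $T$-orbit into columns of height between $N$ and $2N$. Second, the mean dimension bound: with $d_{N}(x,y)=\max_{0\le i<N}d(T^{i}x,T^{i}y)$, the hypothesis $\mdim(X,T)<D/16$ gives, for $N$ large, $\widim_{\epsilon}(X,d_{N})<ND/16$, so there is an $\epsilon$-embedding $\rho$ of $(X,d_{N})$ onto a polyhedron $P$ with $\dim P<ND/16$. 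Third, general position: since $2\dim P+1\le ND$, one can realize $\rho$, rescaled to amplitude $\delta$, inside the $ND$ real coordinates attached to a column (the $D$ cube-coordinates over its $\sim N$ levels) as a $\delta$-small correction to $g^{0}$; the resulting $g$ separates, on each column, all $d_{N}$-distinct points, and because only about $D/16$ of the available $D$ directions per unit of time are actually needed, there remains a large surplus to reconcile the column-wise choices into a single continuous map.

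The real difficulty — and the reason a finite-dimensional (rather than zero-dimensional) base forces both the extra coordinate and the non-sharp constant — is the seams between columns. As $Z$ is not zero-dimensional, $V$ cannot be taken clopen, so the return-time and column-position functions jump across $\partial V$ and its $S$-iterates, and the column-wise perturbations cannot simply be glued. I would soften the tiling: either induct on $\dim Z$, at each stage thickening the offending boundary (whose dimension strictly drops) and handling it as a lower-dimensional sub-base, or interpolate by a partition of unity between the two nearest admissible column decompositions, so that near a seam $g$ takes a convex combination of two legitimate values. Either way the loss of injectivity and continuity is controlled by the boundary behaviour and is absorbed by the gap between $D/16$ and the roughly $D/2$ that the Menger--N\"obeling step can afford, which is precisely why the constant is $16$ and not the sharp $2$. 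Once $g$ is built, the approximation property follows at once: a same-fibre pair with $d(x,y)\ge\eta$ lies, after a bounded time-shift, inside one column, where $\rho$ keeps it $\epsilon$-apart and hence so do the $g$-coordinates, so $I(x)\ne I(y)$. Feeding this into the Baire argument yields the embedding of $(X,T)$ into $\left(\left([0,1]^{D+1}\right)^{\mathbb{Z}},\sigma\right)$. I expect the seam analysis to be by far the most delicate step.
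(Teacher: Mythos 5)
Your overall architecture (Baire category on an approximation/separation property, one extra coordinate reserved for an embedding $h$ of the base so that only same-fibre pairs need to be separated, a marker/tower decomposition of length $\sim N$, and Lemma \ref{lemma: existence of good approxiamtion}-style general position on each column) is the right skeleton, and the reduction to separating pairs with $\pi x=\pi y$ via Jaworski is sound. But the proposal has a genuine gap exactly where you yourself locate ``the real difficulty'': the seams. Because $Z$ is only finite dimensional, the marker set $V$ is open but not clopen, the return-time and column-position functions are discontinuous across $\partial V$ and its iterates, and near a seam your $g$ is (by your own construction) a convex combination, via a partition of unity, of two column-wise perturbations. Convex combinations of $\varepsilon$-embeddings are not $\varepsilon$-embeddings, so the concluding claim --- ``a same-fibre pair with $d(x,y)\ge\eta$ lies, after a bounded time-shift, inside one column, where $\rho$ keeps it $\epsilon$-apart and hence so do the $g$-coordinates'' --- simply does not hold for points whose orbit segment meets the interpolation region. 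Asserting that the loss ``is controlled by the boundary behaviour and is absorbed by the gap between $D/16$ and roughly $D/2$'' is a statement of the theorem's difficulty, not a proof of it: to make it work one needs a quantitative scheme (an induction on the dimension of the base, several nested scales of markers, and a dimension count showing that the extra coordinates spent on reconciling overlapping block codings stay below the budget) and this is precisely where the constant $16$ is produced. None of that bookkeeping is carried out, and neither of your two suggested softenings is developed to the point where injectivity of $(I_g,I_{h\circ\pi})$ can actually be checked. A secondary, smaller point: the marker lemma you invoke for a merely aperiodic (non-minimal) system needs justification; it is available for aperiodic finite-dimensional systems, but it is itself a nontrivial ingredient of \cite{Gut12a}.

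For contrast, note what the present paper does: it only treats the case of a \emph{zero-dimensional} aperiodic base, where Lemma \ref{lemma: tower lemma} yields a tower with \emph{clopen} base and levels. Then the column decomposition is globally continuous, the map $g$ defined level-by-level on each $B_k$ is automatically continuous, and there are no seams at all --- which is exactly why the paper obtains the sharp constant $D/2$ with a short argument, while the theorem you are proving (quoted here from \cite{Gut12a}) requires the much more delicate seam analysis that your proposal leaves as a sketch.
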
 It is clear that both mean dimension and periodic dimension
are obstructions for embedding. Recently Lindenstrauss and Tsukamoto
conjectured that these are the only obstructions. Sacrificially Conjecture
1.2 of \cite{LT12} states: \begin{conjecture} If $mdim(X,T)<\frac{D}{2}$
and $perdim(X,T)<\frac{D}{2}$ then $(X,T)$ can be embedded into
the system $\left(\left([0,1]^{D}\right)^{\mathbb{Z}},\sigma\right)$.
\end{conjecture}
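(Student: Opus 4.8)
The plan is to produce a single continuous map $f\colon X\to[0,1]^{D}$ whose orbit map $I_{f}\colon X\to\left([0,1]^{D}\right)^{\mathbb{Z}}$, $I_{f}(x)=\left(f(T^{n}x)\right)_{n\in\mathbb{Z}}$, is injective; since $X$ is compact this automatically gives an equivariant embedding. I would work inside the Banach space $C\!\left(X,[0,1]^{D}\right)$ and prove that $\mathcal{I}=\{f:I_{f}\text{ injective}\}$ contains a dense $G_{\delta}$. The reason to pass to a Baire-category formulation on the full function space (rather than cutting $X$ into invariant pieces, which is impossible since the closure of the periodic points may be all of $X$) is that it \emph{decouples} the two hypotheses into three separation requirements, each a condition on $f$ over all of $X$: (a) $f$ separates every pair of points lying on periodic orbits, (b) $f$ separates every pair of aperiodic points, and (c) $f$ separates periodic points from aperiodic ones. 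If each of (a), (b), (c) defines a dense $G_{\delta}$ subset of $C\!\left(X,[0,1]^{D}\right)$, their intersection is nonempty and yields the desired $f$. The hypothesis $\mathrm{perdim}(X,T)<D/2$ will drive (a), the hypothesis $\mdim(X,T)<D/2$ will drive (b), and (c) will follow from a closedness/transversality argument.

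The conceptual heart is that the factor $1/2$ in both hypotheses is precisely the Menger--Nöbeling threshold: a $d$-dimensional compactum generically embeds in $\mathbb{R}^{m}$ once $m>2d$. For the periodic half (a), fix $n$ and set $L=\mathrm{lcm}(1,\dots,n)$, so that every $x\in P_{n}$ satisfies $T^{L}x=x$ and $\dim P_{n}\le n\,\mathrm{perdim}(X,T)<nD/2\le LD/2$. The restriction of $f$ to $P_{n}$ produces the $\mathbb{Z}/L$-equivariant map $x\mapsto\left(f(x),f(Tx),\dots,f(T^{L-1}x)\right)\in\left([0,1]^{D}\right)^{L}$, and since $I_{f}(x)$ is $L$-periodic for $x\in P_{n}$, the map $I_{f}$ separates two points of $P_{n}$ if and only if this finite map does. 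Because $\dim P_{n}<LD/2=\frac{1}{2}\dim\left(\left([0,1]^{D}\right)^{L}\right)$, an equivariant Menger--Nöbeling count shows that a generic $f$ makes this map injective on $P_{n}$. This is exactly the mechanism behind the finite-dimensional embedding theorem of \cite{Gut12a} quoted in the introduction, so I would adapt its proof to each $P_{n}$ and take the countable intersection over $n$ to obtain density of (a).

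For the aperiodic half (b) I would adapt the encoding technique of the main theorem of the present paper. The obstacle here is structural: the aperiodic points of a general $(X,T)$ need not organize into an extension of an aperiodic subshift, so the condition (b) cannot be quoted directly. The first step is therefore a marker construction: using aperiodicity together with $\mdim(X,T)<\infty$, build a sequence of markers (sets meeting each aperiodic orbit with return gaps in a prescribed range, measurable with respect to a zero-dimensional factor) so that the perturbations of $f$ can be organized tile by tile along orbit segments, reproducing on the aperiodic part the subshift-extension structure required by the main theorem. On these tiles the hypothesis $\mdim(X,T)<D/2$ then supplies, via the main theorem's encoding, a dense $G_{\delta}$ of maps separating aperiodic pairs; here the same factor $2$ reappears as ``twice the mean dimension is less than $D$'', the dynamical incarnation of the Menger--Nöbeling bound measured per unit time.

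The hard part will be the gluing, i.e. requirement (c) together with the mutual compatibility of (a) and (b). Two difficulties stand out. First, the marker construction underlying (b) must be carried out in the presence of periodic points: the markers must avoid the sets $P_{n}$, yet aperiodic points that spend long stretches shadowing a low-period orbit force unbounded return times, so the tiling and the per-tile perturbations must be arranged along the whole hierarchy $P_{1}\subset P_{2}\subset\cdots$ simultaneously rather than one level at a time. Second, one must verify that perturbations separating periodic orbits do not undo the aperiodic separation, and conversely; the Baire framework reduces this to checking that (a), (b), (c) are each dense and each $G_{\delta}$ (or open), but density of (c)---separating a periodic point from nearby aperiodic points---requires a uniform lower bound on the ``room'' available near the closure $\overline{\bigcup_{n}P_{n}}$, which is exactly where $\mdim$ and $\mathrm{perdim}$ must be balanced against one another rather than used in isolation. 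I expect this simultaneous, hierarchy-wide balancing of the two dimensional hypotheses to be the principal technical obstacle, and the reason the conjecture is substantially harder than either of the two special cases it unifies.
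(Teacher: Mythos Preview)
The statement you are attempting to prove is stated in the paper as a \emph{conjecture} (attributed to Lindenstrauss and Tsukamoto); the paper does not claim a proof of it in full generality. What the paper actually establishes is the special case in which $(X,T)$ is an extension of an aperiodic zero-dimensional system (Theorem~\ref{thm: main theorem} and Corollaries~\ref{cor: general case}--\ref{cor: optimal case}), so in particular $perdim(X,T)=0$ and there is no periodic part to handle at all. Consequently there is no ``paper's own proof'' to compare against; the paper explicitly says its goal is to verify the conjecture only in this aperiodic-extension setting.

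As for your proposal itself, it is not a proof but a plan, and you have already correctly located the genuine gap: the Baire-category reduction to the three requirements (a), (b), (c) is standard, and each of (a) and (b) in isolation is essentially the content of \cite{Gut12a} and of the present paper respectively, but the simultaneous handling---especially the marker/tower construction for (b) when periodic points are dense, and the density of (c) near $\overline{\bigcup_n P_n}$---is not supplied. The tower lemma used here (Lemma~\ref{lemma: tower lemma}) relies crucially on the \emph{aperiodic zero-dimensional factor}, which furnishes clopen markers with uniformly bounded return times; in a general system no such factor exists, return times near periodic points are unbounded, and the per-tile $\varepsilon$-embedding argument of Section~\ref{subsection: proof of main proposition} breaks down exactly where you say it does. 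Until a substitute for that structural input is produced, the argument remains a sketch of the known special cases rather than a proof of the conjecture.
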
 Our goal in this paper is to establish this conjecture
in a special (infinite-dimensional aperiodic) case and show this result
is sharp. As mentioned before \cite{Gut12a} contains a proof that
conjecture is true if $X$ is finite dimensional.

Throughout this paper we use the following notation: For a map $f:X\to[0,1]^{D}$
we define $I_{f}:X\to\left([0,1]^{D}\right)^{\mathbb{Z}}$ by $I_{f}(x)\triangleq(f(T^{n}x))_{n\in\mathbb{Z}}$.
Our main result is the following: \begin{theorem} \label{thm: main theorem}
Let $D$ be a positive integer. Let $(Z,S)$ be an aperiodic zero
dimensional system, and let $\pi:(X,T)\to(Z,S)$ be an extension of
$(Z,S)$. If $\mdim(X,T)<D/2$, then there exists a continuous map
$f:X\to[0,1]^{D}$ such that
\[
(I_{f},\pi):X\to\left([0,1]^{D}\right)^{\mathbb{Z}}\times Z,\quad x\mapsto(I_{f}(x),\pi(x)),
\]
is an embedding. Indeed such continuous maps $f:X\to[0,1]^{D}$ form
a comeagre subset of $C(X,[0,1]^{D})$ (the space of continuous maps
from $X$ to $[0,1]^{D}$). \end{theorem}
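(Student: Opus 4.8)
The plan is a Baire category argument in $C(X,[0,1]^D)$. Since $X$ is compact and $\left([0,1]^{D}\right)^{\mathbb{Z}}\times Z$ is Hausdorff, $(I_f,\pi)$ is an embedding iff it is injective, and because $\pi(x)\neq\pi(y)$ already separates $x$ from $y$, injectivity of $(I_f,\pi)$ is equivalent to: for all $x\neq y$ with $\pi(x)=\pi(y)$ there is $n\in\mathbb{Z}$ with $f(T^{n}x)\neq f(T^{n}y)$. For rational $\epsilon>0$ set $\Delta_{\epsilon}=\{(x,y)\in X\times X:\pi(x)=\pi(y),\ d(x,y)\geq\epsilon\}$, which is compact, and let $\mathcal{G}_{\epsilon}\subseteq C(X,[0,1]^{D})$ be the set of $f$ such that for every $(x,y)\in\Delta_{\epsilon}$ some $n$ gives $f(T^{n}x)\neq f(T^{n}y)$. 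Then $\{f:(I_f,\pi)\text{ is an embedding}\}=\bigcap_{\epsilon\in\mathbb{Q}_{>0}}\mathcal{G}_{\epsilon}$, so by the Baire category theorem it suffices to prove each $\mathcal{G}_{\epsilon}$ is open and dense; this also yields the comeagreness statement. Openness is routine: on the compact set $\Delta_{\epsilon}$ the function $(x,y)\mapsto\sup_{n}\left|f(T^{n}x)-f(T^{n}y)\right|$ is lower semicontinuous, hence if $f\in\mathcal{G}_{\epsilon}$ it is bounded below by some $c>0$, and then every $f'$ with $\norm{f-f'}<c/2$ also lies in $\mathcal{G}_{\epsilon}$.

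The substance is the density of $\mathcal{G}_{\epsilon}$: given $f_{0}$ and $\eta>0$ I must find $f\in\mathcal{G}_{\epsilon}$ with $\norm{f-f_{0}}<\eta$. Here I would use that an aperiodic zero-dimensional system admits, for every prescribed height $N$, a finite clopen Kakutani--Rokhlin partition $Z=\bigsqcup_{m}\bigsqcup_{i=0}^{m-1}S^{i}F_{m}$ with all heights $m$ between $N$ and some $M$ (a standard Rokhlin/marker lemma for aperiodic zero-dimensional systems); pulling back along $\pi$ gives a clopen tower partition $X=\bigsqcup_{m}\bigsqcup_{i=0}^{m-1}T^{i}B_{m}$ with $B_{m}=\pi^{-1}(F_{m})$. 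Since $\pi(x)=\pi(y)$ forces $x,y$ to lie over the same point of $Z$, any pair in $\Delta_{\epsilon}$ sits in the same tower at the same level $i$; writing $b=T^{-i}x,\ c=T^{-i}y\in B_{m}$ we get $\pi(b)=\pi(c)$, $b\neq c$, and $d_{m}(b,c):=\max_{0\le j<m}d(T^{j}b,T^{j}c)\geq\epsilon$. It therefore suffices to produce $f$ with $\norm{f-f_{0}}<\eta$ such that, for each $m$, the tower-encoding map $F_{f,m}:B_{m}\to\left([0,1]^{D}\right)^{m}$, $b\mapsto(f(T^{j}b))_{0\le j<m}$, separates every such pair in $B_{m}$. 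Because the towers are clopen and pairwise disjoint, this is a purely local problem on each $B_{m}$: no compatibility constraint across different $m$, and no obstruction to global continuity of $f$.

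The hypothesis $\mdim(X,T)<D/2$ enters through the width dimension: for every $\alpha>0$ and all large $m$ one has $\widim_{\alpha}(X,d_{m})<Dm/2$, hence a continuous $\alpha$-embedding $g$ of $(X,d_{m})$ into a finite polyhedron $K$ with $2\dim K<Dm$. Choosing $\alpha<\epsilon$ and using compactness, $g$ distinguishes every $d_{m}$-$\epsilon$-separated pair of $B_{m}$ with a uniform gap, so that $(F_{f_{0},m},g):B_{m}\to\left([0,1]^{D}\right)^{m}\times K$ already separates every relevant pair; it remains to absorb the $K$-coordinates into the $Dm$ tower coordinates by an $\eta$-perturbation of $F_{f_{0},m}$. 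This is a Menger--N\"obeling / general-position step: one adds a suitably scaled PL-embedded image of $K$, spread among the $Dm$ tower coordinates (possible exactly because $2\dim K<Dm$) and chosen generically relative to $F_{f_{0},m}$, so that all double points of $F_{f,m}$ at $d_{m}$-scale $\geq\epsilon$ disappear. The threshold $2\dim K<Dm$, divided by the $D$ coordinates available at each tower level, is the source of the sharp constant $D/2$.

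The hard part — the genuine technical heart — is making this perturbation work quantitatively: it must be smaller than $\eta$ everywhere yet large enough to split all $d_{m}$-$\epsilon$-separated pairs, which are controlled only through $g$ (i.e.\ through the finite-dimensional $K$) up to the modulus of continuity $\omega_{f_{0}}$ of $f_{0}$. This dictates the order of parameter choices: fix $\epsilon,\eta,f_{0}$, then $\alpha$, and only then the tower height $N$, taken last and very large, so that $\widim_{\alpha}(X,d_{N})<DN/2$ holds with room to spare and the $DN$ tower coordinates afford enough freedom for the general-position argument to defeat $\omega_{f_{0}}$. The top $O(1)$ levels of each tower, where the perturbation is essentially uncontrolled, affect only an $O(1/N)$ fraction of coordinates and are harmless. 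Granting this perturbation lemma, density of each $\mathcal{G}_{\epsilon}$ follows, and with it the theorem together with the comeagreness assertion.
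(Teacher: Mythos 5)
Your overall reduction is exactly the paper's: a Baire category argument over the open sets of $f$ that separate all pairs with equal $\pi$-image and $d(x,y)\geq\epsilon$; a clopen Kakutani--Rokhlin tower over the aperiodic zero-dimensional factor with heights in a bounded window (the paper's representation lemma, heights in $\{N+1,\dots,2N+1\}$); the observation that $\pi(x)=\pi(y)$ forces $x$ and $y$ to sit at the same level of the same tower, so that density reduces to a purely local separation problem on each clopen base $(B_m,d_0^{m-1})$; and the entrance of the hypothesis through $\widim_{\varepsilon}(X,d_0^{m-1})<Dm/2$ for all large $m$. Up to that point the proposal is correct and matches the paper.

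The gap is precisely the step you ``grant'': that $F_{f_0,m}$ admits a sup-norm-$\eta$ perturbation $F:B_m\to([0,1]^D)^m$ separating all $d_0^{m-1}$-$\epsilon$-separated pairs. This is the paper's approximation lemma (Lemma \ref{lemma: existence of good approxiamtion}) and it is the only real analytic content left after the tower reduction, so it cannot be deferred; moreover the route you sketch for it is not sound as stated. Writing $F=F_{f_0,m}+t\,\iota\circ g$ with $\iota$ a (generic, scaled) PL embedding of the polyhedron $K$ runs into cancellation: a pair $b,b'$ with $d_0^{m-1}(b,b')\geq\epsilon$ may satisfy $F_{f_0,m}(b)-F_{f_0,m}(b')=t\bigl(\iota(g(b'))-\iota(g(b))\bigr)$, and since $B_m$ (like $X$) may be infinite dimensional you cannot dimension-count such bad pairs, nor control them through $K$ alone: over a fixed pair $(u,v)$ of $g$-values the differences $F_{f_0,m}(b)-F_{f_0,m}(b')$ with $g(b)=u$, $g(b')=v$ form a compact set of a priori full dimension in $\mathbb{R}^{Dm}$, which a generic choice of $\iota$ has no reason to avoid. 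The paper sidesteps this by not perturbing $F_{f_0,m}$ additively at all: it takes an open cover of order $<Dm/2$ and mesh $<\varepsilon$, a subordinate partition of unity $\{\varphi_i\}$, and sets $F(x)=\sum_i\varphi_i(x)u_i$ with the $u_i$ close to values of the original map and in general position; then $F(x)=F(y)$ forces a common cover element, hence $d_0^{m-1}(x,y)<\varepsilon$, and $\norm{F-F_{f_0,m}}_{\infty}<\delta$ follows because $\varepsilon$ was chosen in advance from the modulus of continuity of $f_0$. Finally, your remark that the ``top $O(1)$ levels of each tower, where the perturbation is essentially uncontrolled,'' are harmless because they occupy an $O(1/N)$ fraction of coordinates is both unnecessary and untenable: here the towers form an exact clopen partition and the new $f$ is defined block by block, so there is no boundary effect at all, while an uncontrolled perturbation on even a single level would already violate $\norm{f-f_0}_{\infty}<\eta$, which is a supremum over all of $X$, not an average over coordinates.
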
 \begin{example}\label{example: extension of zero dimensional system}
Let $(X,T)$ be an arbitrary dynamical system, and let $(Z,S)$ be
a zero dimensional system. As $mdim(X\times Z,T\times S)\leq mdim(X,T)+mdim(Z,S)$
(\cite[Proposition 2.8]{LW}), then the product system $(X,T)\times(Z,S)$
is an extension of $(Z,S)$ whose mean dimension is equal to the mean
dimension of $(X,T)$. \end{example}

Theorem \ref{thm: main theorem} enables us to give a partial solution
to Problem \ref{main problem} by using:
 \begin{corollary} \label{cor: general case}
Let $D$ be a positive integer, and let $(X,T)$ be an extension of
an aperiodic zero dimensional system. If $\mdim(X,T)<D/2$, then $(X,T)$
can be embedded into $\left(\left([0,1]^{D+1}\right)^{\mathbb{Z}},\sigma\right)$.
\end{corollary}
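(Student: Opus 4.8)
The plan is to deduce Corollary~\ref{cor: general case} from Theorem~\ref{thm: main theorem} by using Jaworski's embedding theorem to fold the zero dimensional base into one extra cube coordinate; all the real work is already done in Theorem~\ref{thm: main theorem}. So, let $\pi\colon (X,T)\to (Z,S)$ be the given extension of the aperiodic zero dimensional system $(Z,S)$. Since $\mdim(X,T)<D/2$, Theorem~\ref{thm: main theorem} produces a continuous map $f\colon X\to[0,1]^{D}$ such that $(I_{f},\pi)\colon X\to \left([0,1]^{D}\right)^{\mathbb{Z}}\times Z$ is an equivariant embedding. Separately, $(Z,S)$ is aperiodic and, being zero dimensional, is finite dimensional, so Jaworski's theorem \cite{J74} (see also \cite[Chapter~13, Theorem~9]{A}) gives an equivariant embedding $\psi\colon (Z,S)\to \left([0,1]^{\mathbb{Z}},\sigma\right)$. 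Putting $g(z)\triangleq \psi(z)_{0}$, equivariance yields $\psi(z)_{n}=\psi(S^{n}z)_{0}=g(S^{n}z)$, that is $\psi=I_{g}$, and moreover $I_{g}\circ\pi=I_{g\circ\pi}$ because $I_{g}(\pi x)=(g(S^{n}\pi x))_{n}=(g(\pi T^{n}x))_{n}$.

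Next I would set $h\triangleq (f,\,g\circ\pi)\colon X\to [0,1]^{D}\times[0,1]=[0,1]^{D+1}$ and consider $I_{h}\colon X\to \left([0,1]^{D+1}\right)^{\mathbb{Z}}$, which is continuous and satisfies $I_{h}T=\sigma I_{h}$ by construction. To see that it is injective, suppose $I_{h}(x)=I_{h}(y)$: reading off the first $D$ coordinates of each symbol gives $I_{f}(x)=I_{f}(y)$, while the last coordinate gives $I_{g\circ\pi}(x)=I_{g\circ\pi}(y)$, i.e.\ $\psi(\pi x)=\psi(\pi y)$, so $\pi x=\pi y$ since $\psi$ is injective; as $(I_{f},\pi)$ is injective, $x=y$. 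Being a continuous injection from a compact space into a Hausdorff space, $I_{h}$ is a topological embedding, so it realizes $(X,T)$ as a subsystem of $\left(\left([0,1]^{D+1}\right)^{\mathbb{Z}},\sigma\right)$.

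I expect no genuine obstacle here; the whole difficulty is in Theorem~\ref{thm: main theorem}. The only point worth flagging is that an aperiodic zero dimensional system need not be expansive --- an odometer, say --- hence need not embed into a full shift on finitely many symbols, which is why one appeals to Jaworski's theorem (legitimate since $\dim Z=0<\infty$) rather than a direct symbolic coding for the base factor $(Z,S)$.
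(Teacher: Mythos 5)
Your proof is correct, but for the one step beyond Theorem \ref{thm: main theorem} --- embedding the base $(Z,S)$ equivariantly into $\left([0,1]^{\mathbb{Z}},\sigma\right)$ --- you take a heavier route than the paper. You invoke Jaworski's theorem (legitimately, since $Z$ is aperiodic and zero, hence finite, dimensional), and your closing remark about odometers explains why you avoided a finite-alphabet coding; but in fact no dynamical embedding theorem is needed at all. Since $Z$ is a zero dimensional compact metric space, there is a \emph{topological} embedding $g:Z\to[0,1]$, and then $I_{g}:(Z,S)\to\left([0,1]^{\mathbb{Z}},\sigma\right)$ is automatically an equivariant embedding: injectivity is already witnessed by the $0$-th coordinate $I_{g}(z)_{0}=g(z)$, so no expansiveness, no aperiodicity of $(Z,S)$, and no symbolic representation are required. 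This is exactly what the paper does; it then embeds $\left(\left([0,1]^{D}\right)^{\mathbb{Z}},\sigma\right)\times(Z,S)$ into $\left(\left([0,1]^{D+1}\right)^{\mathbb{Z}},\sigma\right)$ by adjoining the extra coordinate $g$, which is the same interleaving you carry out explicitly with $h=(f,g\circ\pi)$ and $I_{h}$. So the two arguments share the same skeleton (Theorem \ref{thm: main theorem} plus one additional cube coordinate absorbing the zero dimensional factor); the difference is that the paper's construction of the base embedding is elementary and uses only zero dimensionality, whereas yours imports an external theorem and spends the aperiodicity hypothesis where it is not needed. Your verification that $I_{h}$ is an embedding (injectivity via $I_{f}$ and $\pi$, then compactness plus Hausdorff target) is fine as written.
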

\begin{proof}
If $(Z,S)$ is a zero dimensional
system, then $(Z,S)$ can be embedded into $\left([0,1]^{\mathbb{Z}},\sigma\right)$.
Indeed there exists a topological embedding $g:Z\to[0,1]$. Hence
$I_{g}:(Z,S)\to\left([0,1]^{\mathbb{Z}},\sigma\right)$ gives an embedding
of the system. Therefore the product system $\left(\left([0,1]^{D}\right)^{\mathbb{Z}},\sigma\right)\times(Z,S)$
can be embedded into the system $\left(\left([0,1]^{D+1}\right)^{\mathbb{Z}},\sigma\right)$.
Corollary \ref{cor: general case} now follows from Theorem \ref{thm: main theorem}.
\end{proof}
In the context of the previous corollary, it should be noted that for $\mathbb{Z}^{k}$-extensions of aperiodic zero dimensional systems $(\mathbb{Z}^{k},X)$, it was shown in \cite{G11} that  there exist constants $C(k)>0$, so that $(\mathbb{Z}^{k},X)$ embeds in $\left(\left([0,1]^{\lfloor C(k)mdim(\mathbb{Z}^{k},X)\rfloor+1}\right)^{\mathbb{Z}^{k}},\mathbb{Z}^{k}-\textrm{shift}\right)$.
 
If we assume a stronger condition on the factor $(Z,S)$,
then we can get a sharp result as follows. Let $l$ be a positive
integer, and let $\left(\{1,2,\dots,l\}^{\mathbb{Z}},\mathrm{shift}\right)$
be the shift on the alphabet $\{1,2,\dots,l\}$. A subsystem (closed shift-invariant subset) of such a t.d.s is called a \emph{subshift} (see \cite{W04}).
 \begin{corollary}\label{cor: optimal case}
Let $D$ be a positive integer, and let $(X,T)$ be an extension of
an aperiodic subsystem of $\left(\{1,2,\dots,l\}^{\mathbb{Z}},\mathrm{shift}\right)$.
If $\mdim(X,T)<D/2$, then $(X,T)$ can be embedded into $\left(\left([0,1]^{D}\right)^{\mathbb{Z}},\sigma\right)$.
\end{corollary}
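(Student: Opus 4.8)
The plan is to deduce this from Theorem \ref{thm: main theorem}, the point being to avoid the extra coordinate that Corollary \ref{cor: general case} spends on embedding the zero dimensional factor. Write $(Z,S)$ for the given aperiodic subsystem of $\left(\{1,2,\dots,l\}^{\mathbb Z},\mathrm{shift}\right)$ and $\pi\colon(X,T)\to(Z,S)$ for the extension. Since $Z$ is a closed subset of the Cantor space $\{1,2,\dots,l\}^{\mathbb Z}$, it is zero dimensional, and it is aperiodic by hypothesis; so Theorem \ref{thm: main theorem} applies (using $\mdim(X,T)<D/2$) and furnishes a continuous map $f=(f_{1},\dots,f_{D})\colon X\to[0,1]^{D}$ for which $(I_{f},\pi)\colon X\to\left([0,1]^{D}\right)^{\mathbb Z}\times Z$ is an embedding.

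Next I would fold the $Z$-coordinate into the first coordinate of $f$. Fix $l$ pairwise disjoint closed subintervals $I_{1},\dots,I_{l}\subseteq[0,1]$ (here we use $D\ge 1$) together with affine homeomorphisms $\iota_{j}\colon[0,1]\to I_{j}$. The key observation is that the map $X\to\{1,2,\dots,l\}$, $x\mapsto\pi(x)_{0}$, is locally constant: the cylinder $\{z\in Z:z_{0}=j\}$ is clopen in $Z$, so its $\pi$-preimage is clopen in $X$. Hence
\[
F\colon X\to[0,1]^{D},\qquad F(x)=\bigl(\iota_{\pi(x)_{0}}(f_{1}(x)),\,f_{2}(x),\,\dots,\,f_{D}(x)\bigr),
\]
is continuous, so $I_{F}\colon(X,T)\to\left(\left([0,1]^{D}\right)^{\mathbb Z},\sigma\right)$ is a continuous equivariant map.

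It remains to check that $I_{F}$ is injective; since $X$ is compact this makes it an embedding. Suppose $I_{F}(x)=I_{F}(x')$. For each $n$, looking at the first coordinate of $F(T^{n}x)=F(T^{n}x')$ and using that the $I_{j}$ are pairwise disjoint recovers both $\pi(T^{n}x)_{0}=\pi(T^{n}x')_{0}$ and $f_{1}(T^{n}x)=f_{1}(T^{n}x')$; the remaining coordinates give $f_{k}(T^{n}x)=f_{k}(T^{n}x')$ for $2\le k\le D$. Since $\pi(T^{n}x)_{0}=(S^{n}\pi(x))_{0}=\pi(x)_{n}$, varying $n$ yields $\pi(x)=\pi(x')$ and $I_{f}(x)=I_{f}(x')$, i.e. $(I_{f},\pi)(x)=(I_{f},\pi)(x')$; injectivity of $(I_{f},\pi)$ then gives $x=x'$.

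I do not expect a genuine obstacle here: once Theorem \ref{thm: main theorem} is available, the whole content is the elementary interval-packing trick above, which absorbs the single symbol $\pi(x)_{0}$ into one coordinate of the cube without costing an extra dimension. The only points requiring (routine) care are the continuity of $F$---which is exactly the local constancy of $x\mapsto\pi(x)_{0}$---and the bookkeeping in the injectivity argument.
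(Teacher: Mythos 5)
Your proposal is correct and is essentially the paper's argument: the paper composes the embedding $(I_f,\pi)$ from Theorem \ref{thm: main theorem} with the identification of $\left([0,1]^{D}\right)^{\mathbb Z}\times Z$ as a subsystem of $\left(\left([0,1]^{D}\times\{1,\dots,l\}\right)^{\mathbb Z},\mathrm{shift}\right)$ and a topological embedding of $[0,1]^{D}\times\{1,\dots,l\}$ into $[0,1]^{D}$, which is exactly your interval-packing map written abstractly. You have merely unwound that composition into the explicit map $F$ and verified injectivity directly, so there is no substantive difference.
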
 \begin{proof} Suppose that $(Z,S)$ is an aperiodic
subsystem of $\left(\{1,2,\dots,l\}^{\mathbb{Z}},\mathrm{shift}\right)$
and that $(X,T)$ is an extension of $(Z,S)$. Since $Z$ is zero
dimensional, Theorem \ref{thm: main theorem} implies that $(X,T)$
can be embedded into the system $\left(\left([0,1]^{D}\right)^{\mathbb{Z}},\sigma\right)\times(Z,S)$.
From the assumption the latter is a subsystem of
\begin{equation}
\left(\left([0,1]^{D}\times\{1,2,\dots,l\}\right)^{\mathbb{Z}},\mathrm{shift}\right).\label{eq: intermidiate system}
\end{equation}
The space $[0,1]^{D}\times\{1,2,\dots,l\}$ can be topologically embedded
into $[0,1]^{D}$. Thus the above system (\ref{eq: intermidiate system})
can be embedded into $\left(\left([0,1]^{D}\right)^{\mathbb{Z}},\sigma\right)$.
\end{proof} Corollary \ref{cor: optimal case} is analogous to the
following classical result in dimension theory (\cite[p. 56, Theorem V 2]{HW41}):
If a compact metric space $X$ satisfies $\dim X<D/2$, then $X$
can be topologically embedded into $[0,1]^{D}$. The following proposition
shows that the condition $\mdim(X,T)<D/2$ in Corollary \ref{cor: optimal case}
is optimal. \begin{proposition}\label{prop: example showing optimality}
Let $(Z,S)$ be a zero dimensional system. For any positive integer
$D$, there exists an extension $(X,T)$ of $(Z,S)$ such that $\mdim(X,T)=D/2$
and $(X,T)$ cannot be embedded into $\left(\left([0,1]^{D}\right)^{\mathbb{Z}},\sigma\right)$.
\end{proposition}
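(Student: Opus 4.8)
The plan is to construct $(X,T)$ as a skew product over $(Z,S)$ whose fibers carry a ``universal'' subsystem of mean dimension exactly $D/2$ that is known to fail to embed into $\left(\left([0,1]^{D}\right)^{\mathbb{Z}},\sigma\right)$. The natural candidate comes from the Lindenstrauss--Weiss / Lindenstrauss--Tsukamoto examples: for the full shift on a cube of the right dimension one gets obstructions via a mean-dimension-type invariant that is strictly finer than $\mdim$ at the critical value $D/2$. Concretely, I would take the subshift $(W,\sigma)\subseteq\left(\left([0,1]^{D}\right)^{\mathbb{Z}},\sigma\right)$ (or a suitable block-coded variant on a finite-dimensional cube complex) with $\mdim(W,\sigma)=D/2$ for which \cite{LT12} (or the classical argument behind $\dim X < D/2$ being sharp in \cite{HW41}, lifted to the dynamical setting) shows non-embeddability into $\left(\left([0,1]^{D}\right)^{\mathbb{Z}},\sigma\right)$; the point is that the cohomological/width obstruction to embedding a compact space of dimension $D$ into $[0,1]^{D}$ survives when one passes to the per-coordinate (mean) version at the borderline.

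The first step is to record precisely the fiber model: an aperiodic system $(W,R)$ with $\mdim(W,R)=D/2$ that does not embed equivariantly into $\left(\left([0,1]^{D}\right)^{\mathbb{Z}},\sigma\right)$. The second step is to form the product $X = Z\times W$ with $T = S\times R$; by Example \ref{example: extension of zero dimensional system} and the subadditivity $\mdim(Z\times W)\le\mdim(Z)+\mdim(W)=0+D/2$, together with the obvious lower bound from the factor map onto $W$ (projection is a factor map, and $\mdim$ does not increase under factors, so actually one needs the reverse: $\mdim(Z\times W)\ge\mdim(W)$ because $W$ is a factor), we get $\mdim(X,T)=D/2$. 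The system $(X,T)$ is an extension of $(Z,S)$ via the first projection. The third step is to verify non-embeddability: if $(X,T)$ embedded into $\left(\left([0,1]^{D}\right)^{\mathbb{Z}},\sigma\right)$, then so would its factor (or subsystem, choosing a point $z\in Z$ in a minimal subset and restricting to $\{z'\}\times W$ along the orbit closure) $(W,R)$, contradicting the choice of $(W,R)$. Care is needed here because a subsystem of $X$ of the form ``fiber over a single $S$-orbit closure'' is again an extension of a zero-dimensional system, and one must ensure the restriction still surjects onto all of $W$; taking the product genuinely with all of $W$ handles this since $\{z\}\times W$ for fixed $z$ is not $T$-invariant, so instead I use that $W$ itself is a factor of $X$ and embeddability passes to factors only for the image — so the cleanest route is: $W$ is a factor of $X$, hence if $X$ embeds in the cube shift, the induced map on $W$ is a factor map into the cube shift, but a factor need not embed, so one must instead argue that $W$ is in fact a \emph{retract} or use that the LT-type obstruction is inherited by factors.

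The main obstacle, therefore, is precisely this last point: non-embeddability is not automatically inherited by factors, so I cannot simply say ``$W$ is a factor of $X$, done.'' The resolution is to choose the fiber system so that the obstruction is a lower bound on an invariant (such as the mean dimension with potential, or a width-type quantity) that \emph{is} monotone under the relevant operations, or — more elementarily — to build $(X,T)$ not as a product but so that $W$ embeds into $X$ as a \emph{subsystem} (e.g. $X$ has a $T$-invariant closed subset isomorphic to $W$ sitting over a fixed point-free orbit), making non-embeddability transparent; the aperiodicity of $(Z,S)$ must be used to place such an invariant copy. I would also double-check the borderline mean dimension computation, since equality $\mdim=D/2$ (rather than strict inequality) is exactly what makes the example sharp, and this relies on the precise normalization in the definition of $\mdim$ recalled in the Preliminaries.
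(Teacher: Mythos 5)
Your construction is the same as the paper's (take the Lindenstrauss--Tsukamoto system $(W,R)$ with $\mdim(W,R)=D/2$ and form the product $Z\times W$, which is an extension of $(Z,S)$ with mean dimension $D/2$ by subadditivity plus monotonicity under factors), and you have correctly located the crux: non-embeddability does \emph{not} pass to factors, so ``$W$ is a factor of $Z\times W$, done'' is not an argument. The problem is that your proposal stops at naming two possible ways out and does not carry either one out, and this unresolved step is exactly the mathematical content of the proposition. Your second route (arrange for $W$ to sit inside $X$ as a $T$-invariant subsystem) is essentially a dead end here: as you yourself observe, $\{z\}\times W$ is never invariant unless $z$ is a fixed point, and for a general zero-dimensional $(Z,S)$ there is no natural way to place an invariant copy of $W$ inside an extension of $(Z,S)$; nothing in the proposal makes this precise. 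Also, as a side remark, describing $W$ as a subshift of $\left(\left([0,1]^{D}\right)^{\mathbb{Z}},\sigma\right)$ is self-contradictory, since a subsystem embeds tautologically; the LT example lives in a different ambient shift, and what matters is only its intrinsic obstruction.

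The paper closes the gap by making your first route concrete, and the point is that the LT obstruction is not an abstract invariant but a metric one that trivially survives taking products. Property (ii) of the LT system gives distance-increasing continuous maps $(Y^{c_{n}},d_{\ell^{\infty}})\to (W,d_{0}^{b_{n}-1})$ with $b_{n}\to\infty$ and $c_{n}-Db_{n}/2\to\infty$, where $Y$ is the triod. Equip $W\times Z$ with the max metric; then for any fixed $p\in Z$ the map $w\mapsto(w,p)$ is an \emph{isometric} embedding of $(W,d_{0}^{n-1})$ into $(W\times Z,\dist_{0}^{n-1})$, so the same distance-increasing maps from $Y^{c_{n}}$ land in the product. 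Now suppose the product embedded in $(K^{\mathbb{Z}},\sigma)$ with $K=[0,1]^{D}$: uniform continuity of the inverse yields an $L$ such that projecting to the coordinates $[-L,b_{n}+L-1]$ gives a $(1/2)$-embedding of $(W\times Z,\dist_{0}^{b_{n}-1})$, hence of $(Y^{c_{n}},d_{\ell^{\infty}})$, into $[0,1]^{D(b_{n}+2L)}$. By the Matou\v{s}ek-type lemma (no $\varepsilon$-embedding of $Y^{n}$ into $\mathbb{R}^{2n-1}$ for $\varepsilon<1$) this forces $2c_{n}\leq D(b_{n}+2L)$, i.e. $c_{n}-Db_{n}/2\leq LD$, contradicting $c_{n}-Db_{n}/2\to\infty$. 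This is the step your proposal defers to ``an invariant monotone under the relevant operations''; without it (or an equivalent argument) the proof is incomplete.
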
 Indeed this proposition is a corollary of a stronger
result (Proposition \ref{prop: non-embeddability}) given in Section
\ref{section: proof of proposition showing optimality}.

\section{Preliminaries}

\label{section: preliminaries} In this section we present some basic
facts on mean dimension theory. For more details, see Gromov \cite{G}
and Lindenstrauss-Weiss \cite{LW}. Let $(X,d)$ be a compact metric
space, $Y$ be a topological space and $f:X\to Y$ be a continuous
map. $f$ is called an $\varepsilon$-embedding for $\varepsilon>0$
if $\Diam(f^{-1}(y))<\varepsilon$ for every $y\in Y$. We define
$\widim_{\varepsilon}(X,d)$ as the minimum of $n\geq0$ such that
there exist a $n$-dimensional polytope $P$ and an $\varepsilon$-embedding
$f:X\to P$. Equivalently, $\widim_{\varepsilon}(X,d)$ is the minimum
$n\geq0$ such that there exists an open covering $\{U_{i}\}_{i=1}^{N}$
of $X$ satisfying $\Diam(U_{i})<\varepsilon$ $(1\leq i\leq N)$
and $U_{i_{1}}\cap U_{i_{2}}\cap\dots\cap U_{i_{n+2}}=\emptyset$
for all $1\leq i_{1}<i_{2}<\dots<i_{n+2}\leq N$ (i.e. the order of
$\{U_{i}\}_{i=1}^{N}$ is at most $n$). $\widim_{\varepsilon}(X,d)$
is monotone non-decreasing as $\varepsilon\to0$.

The following is a key property of $\widim_{\varepsilon}(X,d)$ in
the study of our embedding problem: \begin{lemma} \label{lemma: existence of good approxiamtion}

Let $(X,d)$ be a compact metric space, and let $f:X\to[0,1]^{M}$
be a continuous map. Suppose that positive numbers $\delta$ and $\varepsilon$
satisfy the following condition:
\[
d(x,y)<\varepsilon\Rightarrow\norm{f(x)-f(y)}_{\infty}<\delta.
\]
(Here $\norm{\cdot}_{\infty}$ is the $\ell^{\infty}$-norm.) Under
this condition, if $\widim_{\varepsilon}(X,d)<M/2$ then there exists
an $\varepsilon$-embedding $g:X\to[0,1]^{M}$ satisfying $\sup_{x\in X}\norm{f(x)-g(x)}_{\infty}<\delta$.
\end{lemma}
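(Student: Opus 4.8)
The plan is to convert the hypothesis $\widim_\varepsilon(X,d)<M/2$ into a concrete finite open cover and then obtain $g$ as a controlled perturbation of a \emph{naive} approximation of $f$. Write $n=\widim_\varepsilon(X,d)$, so $n<M/2$ and hence $2n+2\le M+1$. By the covering description of $\widim_\varepsilon$ recalled above, there is a finite open cover $\{U_i\}_{i=1}^{N}$ of $X$ with $\Diam(U_i)<\varepsilon$ for all $i$ and of order $\le n$, i.e.\ no point of $X$ lies in more than $n+1$ of the $U_i$. Fix a partition of unity $\{\phi_i\}_{i=1}^{N}$ with $\supp\phi_i\subset U_i$ and pick a point $x_i\in U_i$ for each $i$. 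The naive candidate $x\mapsto\sum_i\phi_i(x)f(x_i)$ is already $\delta$-close to $f$: if $\phi_i(x)>0$ then $x,x_i\in U_i$, so $d(x,x_i)\le\Diam(U_i)<\varepsilon$ and therefore $\norm{f(x_i)-f(x)}_\infty<\delta$ by hypothesis. This map need not be an $\varepsilon$-embedding, and the whole point of the proof is to repair that by nudging the target values $f(x_i)$ into general position.

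Next I would set up the perturbation with room to spare. Since each $\overline{U_i}$ is compact with $\Diam(\overline{U_i})=\Diam(U_i)<\varepsilon$, the quantity $\delta_i:=\sup\{\norm{f(x)-f(y)}_\infty:x,y\in\overline{U_i}\}$ is attained at a pair with $d(x,y)<\varepsilon$ and hence is $<\delta$; put $\delta_0:=\max_i\delta_i<\delta$ and choose $\eta>0$ with $\delta_0+\eta<\delta$. Then I would choose points $w_1,\dots,w_N\in[0,1]^{M}$ with $\norm{w_i-f(x_i)}_\infty<\eta$ that are in \emph{general position}, meaning any at most $M+1$ of them are affinely independent; this is possible because at the $k$-th step the forbidden set is a finite union of affine subspaces of dimension $<M$, whose complement is dense in $[0,1]^{M}$, so one stays inside the cube and within $\eta$ of $f(x_k)$. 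Define $g(x):=\sum_{i=1}^{N}\phi_i(x)w_i$. Convexity of $[0,1]^{M}$ gives $g(X)\subset[0,1]^{M}$, and for every $x$,
\[
\norm{f(x)-g(x)}_\infty\le\sum_{i:\phi_i(x)>0}\phi_i(x)\bigl(\norm{f(x)-f(x_i)}_\infty+\norm{f(x_i)-w_i}_\infty\bigr)<\delta_0+\eta<\delta .
\]

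It remains to check that $g$ is an $\varepsilon$-embedding, and this is where the factor $2$ in $M/2$ enters. Suppose $g(x)=g(y)$, and set $\sigma=\{i:\phi_i(x)>0\}$, $\tau=\{j:\phi_j(y)>0\}$; the order bound gives $|\sigma|\le n+1$ and $|\tau|\le n+1$, so $|\sigma\cup\tau|\le 2n+2\le M+1$. Now $g(x)$ lies in the convex hull of $\{w_i:i\in\sigma\}$ and $g(y)$ in that of $\{w_j:j\in\tau\}$, and I would invoke the elementary linear-algebra fact that, for points in general position with at most $M+1$ of them involved, the intersection of these two convex hulls is exactly the convex hull of $\{w_i:i\in\sigma\cap\tau\}$: subtracting the two convex representations of the common point $g(x)=g(y)$ yields an affine relation among $\{w_i\}_{i\in\sigma\cup\tau}$, which by affine independence forces all coefficients supported on $\sigma\setminus\tau$ and on $\tau\setminus\sigma$ to vanish. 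In particular $\sigma\cap\tau\neq\emptyset$; picking $i\in\sigma\cap\tau$ gives $x,y\in U_i$, hence $d(x,y)\le\Diam(U_i)<\varepsilon$. Since there are only finitely many $U_i$, $\Diam(g^{-1}(p))\le\max_i\Diam(U_i)<\varepsilon$ for every $p\in[0,1]^{M}$, so $g$ is an $\varepsilon$-embedding, and it is $\delta$-close to $f$ by the displayed estimate.

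The only genuinely delicate ingredient is the general-position step together with the accompanying fact that two convex hulls of at most $M+1$ generic points meet precisely along their common face; this is the geometric analogue of the classical statement that an $n$-dimensional polytope embeds in $\mathbb{R}^{2n+1}$, and it is exactly what turns ``$\widim_\varepsilon(X,d)<M/2$'' into an honest $\varepsilon$-embedding. Everything else — extracting the cover, choosing the subordinate partition of unity, and propagating the $\delta$-estimate through the given modulus of continuity — is routine, the one mild point of care being to keep the perturbed points $w_i$ inside the cube $[0,1]^{M}$, which is harmless since deleting finitely many affine subspaces of dimension $<M$ leaves a dense subset of $[0,1]^{M}$.
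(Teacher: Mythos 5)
Your proof is correct and follows essentially the same route as the paper: an open cover of order $\widim_\varepsilon(X,d)$ with mesh $<\varepsilon$, a subordinate partition of unity, perturbation of the values $f(x_i)$ to points in general position inside the cube (using $2\,\widim_\varepsilon+2\le M+1$), and the observation that a coincidence $g(x)=g(y)$ forces a common cover element, hence $d(x,y)<\varepsilon$. The only cosmetic difference is that the paper encodes general position via the condition on pairs $J,K$ with $|J|,|K|\le a+1$, while you phrase it as affine independence of any $M+1$ of the perturbed points; these are interchangeable here.
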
 \begin{proof} Set $a\triangleq\widim_{\varepsilon}(X,d)$.
Let $X=\bigcup_{i=1}^{N}U_{i}$ be an open covering of order $a$
satisfying $\Diam(U_{i})<\varepsilon$. Fix $x_{i}\in U_{i}$, and
let $\{\varphi_{i}\}_{i=1}^{N}$ be a partition of unity satisfying
$\supp\varphi_{i}\subset U_{i}$. From the assumption on $\delta$
and $\varepsilon$, we have
\[
s\triangleq\sup_{x\in\supp\varphi_{i}}\norm{f(x)-f(x_{i})}_{\infty}<\delta.
\]
Since $2a<M$, we can choose $u_{i}\in[0,1]^{M}$ $(1\leq i\leq N)$
satisfying
\[
\norm{u_{i}-f(x_{i})}_{\infty}<\delta-s
\]
and the following condition: If there are $J,K\subset\{1,2,\dots,N\}$
with $|J|,|K|\leq a+1$, $\{\lambda_{i}\}_{i\in J}\in(\mathbb{R}\setminus\{0\})^{|J|}$
and $\{\mu_{i}\}_{i\in K}\in(\mathbb{R}\setminus\{0\})^{|K|}$ such
that
\[
\sum_{i\in J}\lambda_{i}=\sum_{i\in K}\mu_{i}=1,\quad\sum_{i\in J}\lambda_{i}u_{i}=\sum_{i\in K}\mu_{i}u_{i},
\]
then $J=K$ and $\lambda_{i}=\mu_{i}$ for all $i\in J=K$. (The existence
of such $\{u_{i}\}_{i=1}^{N}$ follows from the fact that one can
choose almost surely (w.r.t Lebesgue measure) in $([0,1]^{M})^{N}$,
$\{u_{i}\}_{i=1}^{N}$ to be affinely independent, see \cite[Appendix]{Gut12a}).

We define $g:X\to[0,1]^{M}$ by setting $g(x)\triangleq\sum_{i=1}^{N}\varphi_{i}(x)u_{i}$.
We have $g(x)-f(x)=\sum_{i}\varphi_{i}(x)(u_{i}-f(x))$. If $\varphi_{i}(x)\neq0$
then from the definition of $u_{i}$
\[
\norm{u_{i}-f(x)}_{\infty}\leq\norm{u_{i}-f(x_{i})}_{\infty}+\norm{f(x_{i})-f(x)}_{\infty}<\delta.
\]
Hence $\norm{g(x)-f(x)}_{\infty}<\delta$ for all $x\in X$.

If $g(x)=g(y)$ for some $x,y\in X$, then the choice of $\{u_{i}\}$
implies that there exists $1\leq i\leq N$ satisfying $\varphi_{i}(x)=\varphi_{i}(y)>0$.
Hence $x,y\in U_{i}$. Then $d(x,y)\leq\Diam(U_{i})<\varepsilon$.
This shows that $g$ is an $\varepsilon$-embedding.

\end{proof}

Suppose that $T:X\to X$ is a homeomorphism. For two integers $a<b$
we define a new distance $d_{a}^{b}$ by setting $d_{a}^{b}(x,y)\triangleq\max_{a\leq i\leq b}d(T^{i}x,T^{i}y)$.
We define the mean dimension $\mdim(X,T)$ by
\[
\mdim(X,T)\triangleq\sup_{\varepsilon>0}\left(\lim_{n\to+\infty}\frac{\widim_{\varepsilon}(X,d_{0}^{n-1})}{n}\right).
\]
This limit always exists. (The limit value can be $\infty$.) %

\section{Proof of Theorem \ref{thm: main theorem}}

\label{section: proof of the main theorem} In this section we prove
Theorem \ref{thm: main theorem}. The basic structure of the proof
is the same as Lindenstrauss \cite[Section 5]{L99}. Lindenstrauss
proved Theorem \ref{theorem of Lindenstrauss} by using a Rohlin-tower
like lemma \cite[Lemma 3.3]{L99}. The proof of this tower lemma uses
the assumption that $(X,T)$ is an extension of an infinite minimal
system. Here we assume that $(X,T)$ is an extension of an aperiodic
zero dimensional system. This assumption implies a much stronger tower
lemma (Lemma \ref{lemma: tower lemma}), and thus we are able to prove
a sharp result for the embeddedability of $(X,T)$. Throughout this
section, $(X,d)$ is a compact metric space and $T:X\to X$ is a homeomorphism
such that there exist an aperiodic zero dimensional system $(Z,S)$
and a factor map $\pi:(X,T)\to(Z,S)$. Moreover we assume that $\mdim(X,T)<D/2$
for a positive integer $D$. For simplicity of the notation, we set
$K\triangleq[0,1]^{D}$.

For $\eta>0$, we define a subset $A(\eta)$ of $C(X,K)$ by
\[
A(\eta)\triangleq\{f|\,\text{\ensuremath{(I_{f},\pi):X\to K^{\mathbb{Z}}\times Z}is an \ensuremath{\eta}-embedding with respect to \ensuremath{d}.}\}.
\]
It is easy to see that $A(\eta)$ is an open set. We want to prove
the following proposition: \begin{proposition} \label{prop: main proposition}
For any continuous $f:X\to K$, $\delta>0$ and $\eta>0$ there exists
a continuous map $g:X\to K$ such that

\noindent (i) $\norm{f(x)-g(x)}_{\infty}<\delta$ for all $x\in X$.

\noindent (ii) $(I_{g},\pi):X\to K^{\mathbb{Z}}\times Z$ is an $\eta$-embedding
with respect to the distance $d$. \end{proposition}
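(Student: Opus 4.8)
The plan is to combine a Rohlin-type tower decomposition coming from the aperiodic zero-dimensional factor $(Z,S)$ with the finite-dimensional perturbation argument of Lemma 2.2, then pass to a suitable infinite limit. First I would invoke the strong tower lemma (Lemma \ref{lemma: tower lemma}, promised just above): using that $(Z,S)$ is aperiodic and zero-dimensional, for each large $n$ one obtains a clopen subset $B\subset Z$ whose return times under $S$ lie in a controlled range (say $\{n,\dots,2n\}$ or similar), so that $X=\bigsqcup_{j} T^{j}\pi^{-1}(B)$ decomposes into finitely many disjoint clopen-over-the-base "columns" of bounded height. The key point this buys us — and which Lindenstrauss's weaker tower lemma in \cite{L99} did not — is that the base $\pi^{-1}(B)$ is genuinely \emph{clopen} in $X$ (not merely closed), so the tower pieces are a genuine clopen partition and there is no "boundary effect" to fight.

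Next I would realize that on each column the dynamics is essentially trivial: knowing $f$ on the base $\pi^{-1}(B)$ and knowing the itinerary determines $I_f$ along the whole column. So the task of making $(I_g,\pi)$ an $\eta$-embedding reduces, column by column, to choosing the values of $g$ on the base in a sufficiently "spread out" way in the finite-dimensional cube $K^{\text{(height)}}$. Concretely, fix $\varepsilon>0$ with $\widim_{\varepsilon}(X,d)$ slightly below $D/2$ (possible since $\mdim(X,T)<D/2$ and $\widim_\varepsilon$ is what the sup-over-$\varepsilon$ of the limit controls), choose $n$ large so that $\widim_{\varepsilon}(X, d_0^{n-1}) < nD/2$, and then apply Lemma \ref{lemma: existence of good approxiamtion} to the map $X\to (K)^{n}\cong[0,1]^{nD}$ given by $x\mapsto (f(x),f(Tx),\dots,f(T^{n-1}x))$, with target dimension $M=nD$ and the distance $d_0^{n-1}$ in the role of "$d$". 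Lemma \ref{lemma: existence of good approxiamtion} applies precisely because $nD/2 > \widim_{\varepsilon}(X,d_0^{n-1})$, producing a $d_0^{n-1}$-$\varepsilon$-embedding $G=(g_0,\dots,g_{n-1})$ that is $\delta$-close to $(f,Tf,\dots)$ in $\ell^\infty$. One then defines $g$ on $X$ by using the tower: on $T^{j}(\text{base})$ set $g = g_j \circ T^{-j}$ for the appropriate column index, patched across columns of different heights. Condition (i) is then immediate from the $\delta$-closeness.

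For condition (ii): if $(I_g,\pi)(x)=(I_g,\pi)(y)$ then $\pi(x)=\pi(y)$, so $x,y$ sit in the \emph{same} column at the \emph{same} height (here the clopenness of the tower is what makes "same column, same height" a consequence of equal $\pi$-image, up to the small top/bottom of the towers); shifting both to the base and reading off $n$ consecutive coordinates of $I_g$ recovers $G(T^{-j}x)=G(T^{-j}y)$, whence $d_0^{n-1}(T^{-j}x,T^{-j}y)<\varepsilon$, and in particular $d(x,y)<\varepsilon$. Taking $\varepsilon<\eta$ at the outset gives the $\eta$-embedding. \textbf{The main obstacle} I anticipate is the bookkeeping at the tops and bottoms of the towers: a genuine aperiodic Rohlin tower over a clopen base has columns of \emph{several} different heights, and a point near the very top of a tall column has only a few forward $g$-coordinates before it leaves the column, so the clean "read off $n$ coordinates" argument degrades there. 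Handling this requires either allowing a controlled overlap between the $g$ defined on adjacent columns (so every point has a full window of $n$ good coordinates centered on it), or absorbing the thin top/bottom strips into the error budget — and one must check that doing so does not spoil either the $\delta$-closeness or the $\varepsilon$-injectivity. A secondary technical point is verifying that the perturbation $g$ can be taken \emph{continuous} across the column boundaries, which is where the clopenness of the tower base (so that the pieces are glued along empty-interior-free clopen sets) does the real work.
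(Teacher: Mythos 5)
There is a genuine gap, and it sits exactly where you flagged your ``main obstacle'': the argument as written does not handle points in the upper parts of the columns, and the two fixes you sketch are not carried out (and one of them cannot work as stated). You apply Lemma \ref{lemma: existence of good approxiamtion} once, with a single window length $n$, to $x\mapsto(f(x),\dots,f(T^{n-1}x))$ on all of $X$, and then read off $n$ coordinates after shifting $x,y$ down to the base. But if $x,y$ sit at level $j$ of a column of height $k$, the conclusion $d_0^{n-1}(T^{-j}x,T^{-j}y)<\varepsilon$ only controls $d(T^{i-j}x,T^{i-j}y)$ for $0\le i\le n-1$, and you need $i=j$; since the tower necessarily has heights up to roughly $2n$, for $j\ge n$ you get no bound on $d(x,y)$ at all. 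Moreover the top strips are not thin: up to half of a tall column lies above level $n$, so ``absorbing them into the error budget'' is not available --- an $\eta$-embedding must separate \emph{every} pair with $d(x,y)\ge\eta$, there is no measure or smallness to trade against. The ``controlled overlap'' idea (windows extending below the base, in the spirit of $d_{-L}^{M-1}$) can be made to work, but it is a real construction, essentially Lindenstrauss's original route, and you have not supplied it; you have also not specified how $g$ is defined on levels $n,\dots,k-1$, which your injectivity argument silently ignores.

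The paper closes this gap with a small but decisive change of strategy: instead of one application of Lemma \ref{lemma: existence of good approxiamtion} with a fixed window, it applies the lemma once for \emph{each} of the finitely many column heights $k\in R_L=\{L+1,\dots,2L+1\}$, on the clopen set $B_k=(h\circ\pi)^{-1}\{k\}$, with the metric $d_0^{k-1}$ and target $K^k$. This is legitimate because the mean dimension hypothesis is used to choose $L$ so that $\widim_\varepsilon(X,d_0^{k-1})<kD/2$ holds for \emph{all} $k\ge L$ simultaneously, not just for one chosen $n$. Defining $g$ on $T^lB_k$ ($0\le l<k$) from the $l$-th block of $F_k$ makes the read-off window coincide with the full column height, so if $(I_g,\pi)(x)=(I_g,\pi)(y)$ then $F_k(T^{-n}x)=F_k(T^{-n}y)$ and $d_0^{k-1}(T^{-n}x,T^{-n}y)<\varepsilon$ bounds $d(x,y)$ at every level $n<k$, including the top; and since the pieces $T^lB_k$ form a clopen partition of $X$, continuity of $g$ and condition (i) are immediate, with no patching across columns needed. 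Your identification of the tower lemma, of Lemma \ref{lemma: existence of good approxiamtion} with $M=kD$, and of the role of clopenness is correct; what is missing is precisely this per-height application, which is the step that makes the proof work.
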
 Assume this
proposition. Then $A(\eta)$ is dense in $C(X,K)$. The Baire Category
Theorem implies that
\[
\bigcap_{n=1}^{\infty}A(1/n)
\]
is comeagre and hence dense. Then every $f$ in this set gives a desired
solution. So the main problem is to prove Proposition \ref{prop: main proposition}.

\subsection{Representation lemma}

\label{subsection: Tower lemma} Recall that $(Z,S)$ is an aperiodic
zero dimensional system, and that the zero dimensionality implies
that clopen subsets (closed and open subsets) form a basis of the
topology of $Z$.

We start by a representation lemma for $(Z,S)$. As it similar to
the well known concept of a \textit{special automorphism} in measured
dynamics (see \cite{EOM}) we call such a representation a \textit{special
topological dynamical system.} \begin{definition} Let $B$ be a zero dimensional
compact metric space. Let $T_{B}:B\rightarrow B$ be an automorphism.
Let $h:B\rightarrow\mathbb{Z}_{\geq0}$ be continuous. Define $Q=\{(b,z)|\ b\in B,\ 0\leq z<h(b)\}\subset B\times\mathbb{Z}_{\geq0}$.
As $h$ is bounded this is a compact metric zero dimensional space.
Define $T:Q\rightarrow Q$ by:
\[
T(b,z)=\begin{cases}
(b,z+1) & \quad z+1<h(b)\\
(T_{B}(b),0) & \quad z+1=h(b)
\end{cases}
\]
Clearly $T$ is an automorphism. $B$ is referred to as the \textit{base
}of $Q$. $h:B\rightarrow\mathbb{Z}_{\geq0}$ is referred to as the
\textit{roof function} of $Q$. $(Q,T)$ is referred to as a \textit{special
topological dynamical system } and it is said to be \textit{induced}
by $(B,T_{B},h)$.

\end{definition}

\begin{lemma}\label{lemma: tower lemma}

For any positive integer $N$ there exists a clopen set $B\subset Z$,
a continuous function $h:B\rightarrow R_{N}\triangleq\{N+1,\ldots,2N+1\}$
and a continuous automorphism $T_{B}:B\rightarrow B$ so that $(Z,S)$
is isomorphic (as a dynamical system) to a special t.d.s $(Q_{N},T')$
induced by $(B,T_{B},h)$. We denote this isomorphism by $(b,n):Z\rightarrow Q_{N}\subset B\times\mathbb{Z}_{\geq0}$.
\end{lemma}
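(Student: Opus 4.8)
The plan is to build the clopen base $B$ by a careful "first-return" construction on a suitably chosen clopen set, exploiting aperiodicity of $(Z,S)$ together with zero-dimensionality. First I would show that $(Z,S)$ admits, for every $m\in\mathbb{N}$, a clopen set $U$ which is \emph{$m$-spread}: $U,SU,\dots,S^{m-1}U$ are pairwise disjoint and $\bigcup_{i\in\mathbb{Z}}S^iU=Z$. Aperiodicity gives, for each $z\in Z$, that $z,Sz,\dots,S^{m-1}z$ are distinct, hence (Hausdorff) there is a clopen neighbourhood $V_z$ with $V_z,SV_z,\dots,S^{m-1}V_z$ pairwise disjoint; compactness gives a finite subcover $V_{z_1},\dots,V_{z_k}$, and then one extracts a clopen $U$ covering $Z$ under the $S$-orbit whose first $m$ translates are disjoint — the standard trick is to set $U=\bigcup_j (V_{z_j}\setminus \bigcup_{0<i<m}S^i(\text{earlier pieces}))$, checking disjointness of the first $m$ translates by a finite bookkeeping argument. (This is essentially the topological Rohlin lemma for aperiodic zero-dimensional systems; it is where the hypotheses are really used.)

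Next, with $m=2N+2$ say, I take such a clopen $m$-spread set $W$ and let $B$ be its \emph{first-return set}, or rather I modify it so the return time function lands in $R_N=\{N+1,\dots,2N+1\}$. The return time $r(z)=\min\{n\ge 1:S^n z\in W\}$ is continuous (because $W$ is clopen and the first $m$ translates are disjoint, forcing $r\ge m> $ nothing — wait, one wants $r$ bounded, which follows since $\bigcup_i S^iW=Z$ and compactness bounds the gaps). The issue is that $r$ need not take values in the prescribed window $\{N+1,\dots,2N+1\}$: it is bounded, say by $M$, but can be small or large. I would fix this by a standard partitioning step: decompose $W=\bigsqcup_j W_j$ into clopen pieces on which $r$ is constant $=r_j$, then \emph{thin out} $W$ — remove from $B$ every $\ell$-th return point along each orbit — so that consecutive surviving return points are separated by a time in $\{N+1,\dots,2N+1\}$; this is possible whenever every $r_j\le N+1$, which we arrange by first passing to a finer spread (choose the initial spread parameter large enough that gaps between consecutive visits to a sufficiently large clopen set are all $\le N+1$), and then the "combine consecutive gaps greedily until the sum lands in $[N+1,2N+1]$" procedure works since each step adds at most $N+1$ and we start below $N+1$. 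Continuity of the resulting roof function $h:B\to R_N$ and of the induced base map $T_B:B\to B$ (= the $h$-th power of $S$ restricted appropriately) is automatic from clopenness.

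Finally I would define the isomorphism. Set $Q_N=\{(b,z):b\in B,\ 0\le z<h(b)\}$ with the special dynamics $T'$ induced by $(B,T_B,h)$, and define $\Phi:Z\to Q_N$ by $\Phi(\zeta)=(b(\zeta),n(\zeta))$ where $b(\zeta)$ is the most recent point of $B$ on the backward $S$-orbit of $\zeta$ and $n(\zeta)$ the number of steps: i.e. $n(\zeta)=\min\{k\ge 0:S^{-k}\zeta\in B\}$ and $b(\zeta)=S^{-n(\zeta)}\zeta$. Both are well-defined and continuous because $B$ is clopen with bounded gaps, and the verification $\Phi\circ S=T'\circ\Phi$ is a direct case check (increment $n$, or wrap to the next base point when $n+1=h$). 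Bijectivity is clear, and a continuous bijection of compact metric spaces is a homeomorphism, so $\Phi$ is the required isomorphism. The main obstacle I anticipate is the combinatorial thinning step that forces the return times into the exact window $R_N=\{N+1,\dots,2N+1\}$ rather than merely some bounded set: one has to be careful that the merging of consecutive gaps can always be done cleanly by clopen pieces (so the new $h$ stays continuous) and that it leaves no "leftover" gap outside the window at the ends — but since the orbits are bi-infinite and $B$ still meets every orbit, there are no ends to worry about, and the greedy merge never overshoots $2N+1$ because individual gaps are $\le N+1$ and we stop as soon as the running sum exceeds $N$.
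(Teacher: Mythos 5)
Your overall skeleton (clopen section with controlled return times, first-return roof function, induced base map, and the homeomorphism $z\mapsto(b(z),n(z))$ checked equivariantly) matches the paper, and the last step is fine. The genuine gap is in the middle: the ``thin out / greedy merge'' step that is supposed to force the return times into the exact window $R_N=\{N+1,\dots,2N+1\}$ is not an argument. The greedy scan is a recursion along each bi-infinite orbit and has no canonical starting point; whether a given point of $W$ survives into $B$ would depend on an anchor arbitrarily far back in its orbit, so there is no reason the resulting $B$ is clopen (membership must be decided by a bounded amount of local data). If you try to anchor the scan with a marker set, each block between markers ends with a leftover gap $<N+1$ that must be absorbed, pushing the upper bound to roughly $3N+1$ rather than $2N+1$; getting the exact window this way needs additional ideas you do not supply. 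Moreover, the preparatory claim that one can ``choose the initial spread parameter large enough that gaps between consecutive visits are all $\le N+1$'' is backwards: enlarging the spread parameter (disjointness of more forward translates) only increases the minimal gap; an upper bound on gaps is a covering property ($Z=\bigcup_{k=0}^{N}S^kW$), which your construction does not provide. There is also a smaller slip at the start: in the exhaustion $U=\bigcup_j\bigl(V_{z_j}\setminus\bigcup_{0<i<m}S^i(\text{earlier pieces})\bigr)$ you must delete translates of the earlier pieces in \emph{both} time directions ($0<|i|<m$); removing only forward translates leaves points $x\in U$ with $S^jx$ in an earlier (fully retained) piece, destroying the disjointness of $U,SU,\dots,S^{m-1}U$.

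The paper's proof shows why none of this surgery is needed: cover $Z$ by clopen sets $U_1,\dots,U_m$ with $U_i\cap S^kU_i=\emptyset$ for $1\le k\le N$, and exhaust by $V_1=U_1$, $V_{l+1}=V_l\cup\bigl(U_{l+1}\setminus\bigcup_{i=-N}^{N}S^iV_l\bigr)$. The set $B=V_m$ then satisfies simultaneously $B\cap S^kB=\emptyset$ for $1\le k\le N$ \emph{and} $Z=\bigcup_{k=-N}^{N}S^kB$, and these two facts by themselves sandwich the first-return time: the disjointness gives $h>N$, while for $b\in B$ the point $S^{N+1}b$ lies in some $S^kB$ with $|k|\le N$, giving $h(b)\le 2N+1$. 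So the exact window comes out of a single pass, with $h$ locally constant and $T_B$ automatically a clopen homeomorphism. If you want to salvage your two-stage plan you would have to replace the greedy merge with a construction of this type anyway, so the honest fix is to adopt the one-pass exhaustion with two-sided translates and then proceed exactly as in your final paragraph.
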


\noindent \textbf{Notation: }We say such $B,T_{B},h$ are \textit{associated}
with $N$.
\begin{proof}
There exist clopen sets $U_{1},\dots,U_{m}$ such that $X=U_{1}\cup\dots\cup U_{m}$
and $U_{i}\cap S^{k}U_{i}=\emptyset$ for all $1\leq i\leq m$ and
$1\leq k\leq N$. We define clopen sets $V_{l}$ $(1\leq l\leq m)$
by $V_{1}\triangleq U_{1}$ and $V_{l+1}\triangleq V_{l}\cup\left(U_{l+1}\setminus\bigcup_{i=-N}^{N}S^{i}V_{l}\right)$.
We can directly check the following two conditions by using the induction
on $l=1,\dots,m$:
\[
V_{l}\cap S^{k}V_{l}=\emptyset\quad(1\leq l\leq m,1\leq k\leq N),
\]
\[
U_{1}\cup\dots\cup U_{l}\subset\bigcup_{k=-N}^{N}S^{k}V_{l}\quad(1\leq l\leq m).
\]
Then the clopen set $B\triangleq V_{m}$ satisfies $B\cap S^{k}B=\emptyset$
$(1\leq k\leq N)$ and $X=\bigcup_{k=1}^{2N+1}S^{k}B$. Define the
continuous function $h:B\rightarrow R_{N}\triangleq\{N+1,\ldots,2N+1\}$
by $h(b)=\min\{n\geq1|\, S^{n}b\in B\}$. Clearly $N<h(b)\leq2N+1$.
Define $T_{B}:B\rightarrow B$ by $T_{B}b=S^{h(b)}b$. $T_{B}$ is
clearly continuous as $h$ is locally constant. $T_{B}$ is invertible
as $T_{B}^{-1}b=S^{-n(b)}b$ where $n(b)=\min\{l\geq1|\, S^{-l}b\in B\}$
and it clearly holds $h(T_{B}^{-1}b)=n(b)$. Notice $n:X\rightarrow\{1,\ldots,2N+1\}$
extends continuously to all of $X$ by the same formula. Let $(Q_{N},T')$
be the a special t.d.s induced by $(B,T_{B},h)$. Define $b:Z\rightarrow B$
by $b(z)=S^{-n(z)}z$. Note $(b,n):(Z,S)\rightarrow(Q_{N},T')$ is
indeed an equivariant isomorphism.
\end{proof}

\subsection{Proof of Proposition \ref{prop: main proposition}}

\label{subsection: proof of main proposition}

Throughout this subsection we fix a continuous function $f:X\to K$
(recall: $K=[0,1]^{D}$) and positive numbers $\delta$ and $\eta$.
Fix $0<\varepsilon<\eta$ such that
\begin{equation}
d(x,y)<\varepsilon\Rightarrow\norm{f(x)-f(y)}_{\infty}<\delta.\label{eq: fix varepsilon}
\end{equation}
Since $\mdim(X,T)<D/2$, we can take a positive integer $L$ such
that for every $k\geq L$
\begin{equation}
\frac{\widim_{\varepsilon}(X,d_{0}^{k-1})}{k}<\frac{D}{2}.\label{eq: fix L}
\end{equation}
Applying Lemma \ref{lemma: tower lemma} to $(Z,S)$, let $Q=Q_{L}(B,T_{B},h)$
be the special t.d.s induced by $(B,T_{B},h)$ associated with $L$.
Let $B_{k}=(h\circ\pi)^{-1}\{k\}$, $k\in R_{L}$. These are clopen
sets. We consider the function $(I_{f}|_{0}^{k-1})|_{B_{k}}(x)\triangleq(f(x),f(Tx),\ldots f(T^{k-1}x))\in K^{k}$
on the metric space $(B_{k},d_{0}^{k-1})$. Notice that by (\ref{eq: fix varepsilon})
$d_{0}^{k-1}(x,y)<\varepsilon$ implies $\norm{I_{f}|_{0}^{k-1}(x)-I_{f}|_{0}^{k-1}(y)}_{\infty}<\delta$.
By Lemma \ref{lemma: existence of good approxiamtion} there is an
$\varepsilon$-embedding $F_{k}:(B_{k},d_{0}^{k-1})\to K^{k}$ satisfying
\begin{equation}
\sup_{x\in B_{k}}\norm{F_{k}(x)-I_{f}(x)|_{0}^{k-1}}_{\infty}<\delta,\label{eq:F_k is close to I_f_0^k-1}
\end{equation}
Define $g:X\rightarrow K$ by $g(T^{l}x)\triangleq F_{k}(x)|_{l}$
for $x\in B_{k}$ and $0\leq l<k$. Clearly $g$ is continuous and
by (\ref{eq:F_k is close to I_f_0^k-1}) $\sup_{x\in X}\norm{g(x)-f(x)}_{\infty}<\delta$.
Now assume $(I_{g}(x),\pi(x))=(I_{g}(y),\pi(y))$. As $\pi(x)=\pi(y)$
we have $b(\pi(x))=b(\pi(y))$ and $n\triangleq n(\pi(x))=n(\pi(y))$.
Conclude there exists $k\in R_{L}$ so that with $0\leq n<k$ and
$T^{-n}x,T^{-n}y\in B_{k}$. From $I_{g}(T^{-n}x)|_{0}^{k-1}=I_{g}(T^{-n}y)|_{0}^{k-1}$
we conclude $F_{k}(T^{-n}x)=F_{k}(T^{-n}y)$. As $F_{k}:(B_{k},d_{0}^{k-1})\to K^{k}$
is an $\epsilon$-embedding we conclude $d_{0}^{k-1}(T^{-n}x,T^{-n}y)<\epsilon$
which implies $d(x,y)<\epsilon$. $\hfill\square$

\section{Proof of Proposition \ref{prop: example showing optimality}}

\label{section: proof of proposition showing optimality}

The argument in this section is a slight modification of the argument
of Lindenstrauss-Tsukamoto \cite[Lemma 3.3]{LT12}. Let $Y$ be the
triod graph (the graph of the shape ``$Y$''). Let $d_{Y}$ be the
graph distance on $Y$ where all three edges have length $1$. Consider
$Y^{n}$ with the distance $d_{\ell^{\infty}}(x,y)\triangleq\max_{1\leq i\leq n}d_{Y}(x,y)$.
The following fact is proved in \cite[Proposition 2.5]{LT12} by using
the method of Matou\^{s}ek \cite{M03}: \begin{lemma}\label{lemma: property of Y}
For $0<\varepsilon<1$ there does not exist an $\varepsilon$-embedding
from the space $(Y^{n},d_{\ell^{\infty}})$ to $\mathbb{R}^{2n-1}$.
\end{lemma}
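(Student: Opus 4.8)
The plan is to argue by contradiction using the Borsuk--Ulam theorem applied to the $\varepsilon$-deleted product of $Y^{n}$, which is the standard form of Matou\v{s}ek's method. Suppose $f\colon (Y^{n},d_{\ell^\infty})\to\mathbb{R}^{2n-1}$ were an $\varepsilon$-embedding with $0<\varepsilon<1$. Set
\[
\Delta_\varepsilon \triangleq \{(x,y)\in Y^{n}\times Y^{n} \mid d_{\ell^\infty}(x,y)\geq\varepsilon\},
\]
equipped with the free involution $\tau(x,y)=(y,x)$ (free since points of $\Delta_\varepsilon$ have $x\neq y$). As the fibres of an $\varepsilon$-embedding have diameter $<\varepsilon$, we get $f(x)\neq f(y)$ on $\Delta_\varepsilon$, so
\[
(x,y)\longmapsto \frac{f(x)-f(y)}{|f(x)-f(y)|}
\]
is a continuous $\mathbb{Z}_2$-equivariant map $\Delta_\varepsilon\to S^{2n-2}$ (equivariant meaning $\tau$ corresponds to the antipodal map). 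To reach a contradiction it therefore suffices to produce a free $\mathbb{Z}_2$-equivariant map $S^{2n-1}\to\Delta_\varepsilon$: composing the two would give an antipodal map $S^{2n-1}\to S^{2n-2}$, which the Borsuk--Ulam theorem forbids.

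The construction of such a sphere is the heart of the matter, and it is where the exponent $2n-1$ enters. First I would treat a single triod factor. Writing each arm of $Y$ as $[0,1]$ (with $0$ the branch point $c$ and $1$ the tip $t_j$, $j=1,2,3$), the three tips are pairwise at distance $2\geq\varepsilon$, and a tip is at distance $1>\varepsilon$ from any point of another arm. Using this I would build an explicit free equivariant circle $\phi\colon S^{1}\to\{(x,y)\in Y\times Y \mid d_Y(x,y)\geq\varepsilon\}$: view $S^{1}$ as a hexagon whose six vertices are the ordered tip pairs $(t_1,t_2),(t_2,t_3),(t_3,t_1),(t_2,t_1),(t_3,t_2),(t_1,t_3)$ arranged so that the antipodal map of the hexagon coincides with the swap $\tau$, and connect consecutive vertices by moving one coordinate from tip to tip through $c$ while the other is pinned at a tip. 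Along each such edge the two coordinates lie on distinct arms, or one sits at $c$ while the other is a tip, so their distance stays $\geq 1>\varepsilon$ and the diagonal is avoided; defining the last three edges as the $\tau$-images of the first three makes $\phi$ equivariant.

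I would then assemble the $n$ coordinates by a join. Since $(S^{1})^{*n}=S^{2n-1}$, a point of $S^{2n-1}$ is a formal combination $\lambda_1 v_1\oplus\cdots\oplus\lambda_n v_n$ with $v_i\in S^{1}$, $\lambda_i\geq 0$, $\sum_i\lambda_i=1$, and the antipodal map negates every $v_i$. Writing $\phi(v_i)=(a_i,b_i)$, define $A=(A_1,\dots,A_n)$ and $B=(B_1,\dots,B_n)$ by letting $(A_i,B_i)$ be the pair $(a_i,b_i)$ shrunk toward the common point $c$ by a factor $\rho(\lambda_i)$, where $\rho$ is continuous with $\rho=1$ on $[1/n,1]$ and $\rho(0)=0$ (so $A_i=B_i=c$ when $\lambda_i=0$, consistent with the join identification). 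This is continuous, and it is equivariant because negating $v_i$ swaps $a_i\leftrightarrow b_i$ and shrinking toward $c$ commutes with the swap. Since $\sum_i\lambda_i=1$, some coordinate has $\lambda_i\geq 1/n$, where the pair is the unshrunk gadget and hence $d_Y(A_i,B_i)\geq\varepsilon$; thus $d_{\ell^\infty}(A,B)\geq\varepsilon$ and $(A,B)\in\Delta_\varepsilon$. This yields the desired free equivariant map $S^{2n-1}\to\Delta_\varepsilon$ and closes the argument.

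The main obstacle I anticipate is verifying the two gluing issues in the assembly step: that the single-triod hexagon map can be chosen genuinely continuous and strictly off the diagonal along every edge (not merely at the vertices), and that the join-to-product passage is continuous across the strata where various $\lambda_i$ vanish while keeping the image inside the $\varepsilon$-deleted product. Both are routine once the shrinking profile $\rho$ and the tip-to-tip paths are written out explicitly, after which the proof reduces to the dimension count $(S^{1})^{*n}=S^{2n-1}$, one more than $2n-2$, together with the classical Borsuk--Ulam theorem.
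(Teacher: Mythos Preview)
The paper does not give its own proof of this lemma: it simply cites \cite[Proposition 2.5]{LT12} and remarks that the argument there follows Matou\v{s}ek's method, which is exactly the Borsuk--Ulam/deleted-product approach you outline, so the approaches coincide. One small slip to fix: with your stated vertex ordering $(t_1,t_2),(t_2,t_3),(t_3,t_1),(t_2,t_1),(t_3,t_2),(t_1,t_3)$ consecutive vertices differ in \emph{both} coordinates, contradicting your ``one coordinate moves, the other is pinned at a tip'' edge description; reorder as $(t_1,t_2),(t_1,t_3),(t_2,t_3),(t_2,t_1),(t_3,t_1),(t_3,t_2)$ and the hexagon map, and hence the rest of the argument, goes through exactly as you say.
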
 Let $D$ be a positive integer. In \cite[Section 3]{LT12}
a compact metric space $(X,d)$ with a homeomorphism $T:X\to X$ satisfying
the following two conditions was constructed:

\noindent (i) $\mdim(X,T)=D/2$.

\noindent (ii) There exist sequences of positive integers $\{c_{n}\}_{n\geq1}$
and $\{b_{n}\}_{n\geq1}$ such that
\[
\lim_{n\to\infty}b_{n}=\infty,\quad\lim_{n\to\infty}\left(c_{n}-\frac{Db_{n}}{2}\right)=\infty,
\]
and so that there exists a distance-increasing continuous map from
$(Y^{c_{n}},d_{\ell^{\infty}})$ to $(X,d_{0}^{b_{n}-1})$ for every
$n\geq1$ (\cite[Lemma 3.1]{LT12}). (A map $f:(Y^{c_{n}},d_{\ell^{\infty}})\to(X,d_{0}^{b_{n}-1})$
is said to be distance-increasing if $d_{0}^{b_{n}-1}(f(x),f(y))\geq d_{\ell^{\infty}}(x,y)$.)

Moreover one can assume that $(X,T)$ is minimal. But this is not
used below. (Remark: In the notation of \cite[Section 3]{LT12}, we
have $c_{n}=Db_{n}\prod_{k=1}^{n}(1-p_{k})$.)

Proposition \ref{prop: example showing optimality} follows from the
consideration in Example \ref{example: extension of zero dimensional system}
and the following proposition. (We set $K=[0,1]^{D}$ as in Section
\ref{section: proof of the main theorem}.) \begin{proposition} \label{prop: non-embeddability}
For any dynamical system $(Z,S)$, the product system $(X,T)\times(Z,S)$
cannot be embedded into $(K^{\mathbb{Z}},\sigma)$. \end{proposition}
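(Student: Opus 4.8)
The plan is to argue by contradiction: suppose $\psi:(X,T)\times(Z,S)\to(K^{\mathbb{Z}},\sigma)$ is an equivariant embedding. Composing with the coordinate-zero projection $\pi_{0}:K^{\mathbb{Z}}\to K$ gives a continuous map $f=\pi_{0}\circ\psi:X\times Z\to K=[0,1]^{D}$, and equivariance forces $\psi(x,z)=\left(f(T^{n}x,S^{n}z)\right)_{n\in\mathbb{Z}}$, i.e. $\psi=I_{f}$ in the notation for the product system. First I would fix $\varepsilon\in(0,1)$ small enough that $\psi$ is an $\varepsilon'$-embedding for the product metric, for a suitable $\varepsilon'$; because $\psi$ is an embedding of a compact space it is in fact a $\delta$-embedding for some $\delta>0$, and by uniform continuity of $\psi^{-1}$ one can make the relevant constants work. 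The key point to extract is: there is a $\delta>0$ such that if $d_{0}^{N-1}(T^{n}x,T^{n}x')<\delta$ uniformly in a block and the $Z$-coordinates agree, then $x,x'$ are close — but more usefully, that $\psi$ restricted to a block of length $b_{n}$ is essentially controlled by the values $f(T^{j}x,S^{j}z)$ for $0\le j<b_{n}$, which land in $K^{b_{n}}=[0,1]^{Db_{n}}$.

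The heart of the argument is then a dimension-theoretic obstruction using Lemma \ref{lemma: property of Y}. Pick $n$ large enough that $c_{n}>2(Db_{n})-1$, which is possible because $c_{n}-\frac{Db_{n}}{2}\to\infty$ (so in particular $c_{n}>2Db_{n}-1$ eventually — one should check the precise inequality needed, but the superlinear gap gives plenty of room). Use the distance-increasing map $\iota:(Y^{c_{n}},d_{\ell^{\infty}})\to(X,d_{0}^{b_{n}-1})$ from condition (ii). Fix any base point $z_{0}\in Z$ and consider the composite map $Y^{c_{n}}\xrightarrow{\iota}X\to X\times\{z_{0}\}\xrightarrow{\text{block of }f}K^{b_{n}}=\mathbb{R}^{Db_{n}}$, where the last arrow sends $x\mapsto\left(f(x,z_{0}),f(Tx,Sz_{0}),\dots,f(T^{b_{n}-1}x,S^{b_{n}-1}z_{0})\right)$. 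I claim that for $\varepsilon$ chosen as above this composite is an $\varepsilon$-embedding of $(Y^{c_{n}},d_{\ell^{\infty}})$ into $\mathbb{R}^{Db_{n}}$: if two points of $Y^{c_{n}}$ have $d_{\ell^{\infty}}$-distance $\ge\varepsilon$, their images under $\iota$ have $d_{0}^{b_{n}-1}$-distance $\ge\varepsilon$ by distance-increase, hence (by the embedding property of $\psi$, transported through the length-$b_{n}$ block) their images in $K^{b_{n}}$ differ by at least the corresponding $\delta$; shrinking the starting $\varepsilon$ relative to $\delta$ closes this. Since $Db_{n}$ satisfies $2(Db_{n})-1<c_{n}$, this contradicts Lemma \ref{lemma: property of Y}, which forbids $\varepsilon$-embeddings $Y^{c_{n}}\to\mathbb{R}^{2c_{n}-1}$ — wait, one must line up the exponents correctly: Lemma \ref{lemma: property of Y} says there is no $\varepsilon$-embedding $Y^{m}\to\mathbb{R}^{2m-1}$, so we need $Db_{n}\le 2c_{n}-1$, i.e. $c_{n}\ge\frac{Db_{n}+1}{2}$, which again follows from $c_{n}-\frac{Db_{n}}{2}\to\infty$. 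So the target dimension $Db_{n}$ is too small to admit an $\varepsilon$-embedding of $Y^{c_{n}}$, giving the contradiction.

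The main obstacle I anticipate is the bookkeeping in the "transport through a block" step: one must verify carefully that the embedding property of the full doubly-infinite map $\psi=I_{f}$ really does descend to a quantitative separation statement on finite blocks of length $b_{n}$ — that is, that $d_{0}^{b_{n}-1}$-separation of two $X$-points (with matching $Z$-coordinates along that block, which here are literally equal since we use the single orbit segment $z_{0},Sz_{0},\dots$) forces separation of the length-$b_{n}$ windows of $\psi$. This is where compactness and uniform continuity of $\psi^{-1}$ enter, and one has to choose the constants $\varepsilon$ (for $Y^{c_{n}}$) and $\delta$ (the modulus of the embedding) in the right order; since $n$ is chosen \emph{after} $\varepsilon$, and $\varepsilon$ is a fixed constant in $(0,1)$ independent of $n$, there is no circularity. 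A secondary point to state cleanly is that $c_n \to \infty$ fast enough: one needs both $c_n \ge (Db_n+1)/2$ for infinitely many $n$, which is immediate from $\lim(c_n - Db_n/2) = \infty$ together with $b_n \to \infty$. Once these constants are pinned down, the contradiction with Lemma \ref{lemma: property of Y} is immediate, and Proposition \ref{prop: example showing optimality} then follows as noted in the text by taking the product in Example \ref{example: extension of zero dimensional system}.
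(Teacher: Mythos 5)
Your overall strategy (restrict to a fiber $X\times\{z_0\}$, compose the distance-increasing map $Y^{c_n}\to X$ with a finite window of the embedding, and contradict Lemma \ref{lemma: property of Y} via the growth of $c_n-\frac{Db_n}{2}$) is the right one, and your corrected dimension count ($Db_n\le 2c_n-1$, not the first inequality you wrote, which is false since $c_n\le Db_n$ in the construction) is the relevant one. But the step you yourself flag as the ``main obstacle'' is a genuine gap, and as stated your claim is not just unverified but wrong in general: knowing that the two images $\psi(x,z_0),\psi(y,z_0)\in K^{\mathbb{Z}}$ agree on the coordinates $0,\dots,b_n-1$ does \emph{not} let you conclude $d_0^{b_n-1}(x,y)$ is small via uniform continuity of $\psi^{-1}$. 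The metric $d'$ on $K^{\mathbb{Z}}$ sees all coordinates; to deduce that $d'(\sigma^j\psi(x,z_0),\sigma^j\psi(y,z_0))$ is small you need agreement on a window $[j-L,j+L]$ for some $L=L(\varepsilon)$ depending only on $d'$. For $j$ in the interior of the block this is covered by agreement on $[0,b_n-1]$, but for $j$ near $0$ or $b_n-1$ it is not, so separation in $d_0^{b_n-1}$ can be witnessed only by coordinates of $\psi$ lying \emph{outside} $[0,b_n-1]$. Hence the composite into $K^{b_n}=\mathbb{R}^{Db_n}$ need not be an $\varepsilon$-embedding, and compactness/uniform continuity alone will not rescue the exact-length-$b_n$ window.

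The fix is exactly what the paper does: choose $\varepsilon>0$ with $d'(f(x),f(y))<\varepsilon\Rightarrow\dist(x,y)<1/2$, choose $L=L(\varepsilon)$ so that agreement of coordinates on $[-L,L]$ forces $d'<\varepsilon$, and then project onto the \emph{enlarged} window $[-L,\,b_n+L-1]$, i.e.\ into $[0,1]^{D(b_n+2L)}$. This projection composed with the distance-increasing map is a $(1/2)$-embedding of $(Y^{c_n},d_{\ell^\infty})$, and Lemma \ref{lemma: property of Y} then gives $2c_n\le D(b_n+2L)$, i.e.\ $c_n-\frac{Db_n}{2}\le LD$; since $L$ is independent of $n$, this contradicts $c_n-\frac{Db_n}{2}\to\infty$. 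So your argument survives once the target dimension $Db_n$ is replaced by $D(b_n+2L)$ and the separation claim is proved for the enlarged window; without that enlargement the central claim of your proposal is unsupported.
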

\begin{proof} Let $d_{Z}$ be the distance on $Z$. We define a distance
on $X\times Z$ by $\dist((x,z),(x',z'))\triangleq\max(d(x,x'),d_{Z}(z,z'))$.
Fix $p\in Z$. The map
\[
X\to X\times Z,\quad x\mapsto(x,p),
\]
gives an isometric embedding from $(X,d_{0}^{n-1})$ to $(X\times Z,\dist_{0}^{n-1})$
for every $n\geq1$. From property (ii) of $(X,T)$, there exists
a distance-increasing continuous map from $(Y^{c_{n}},d_{\ell^{\infty}})$
to $(X\times Z,\dist_{0}^{b_{n}-1})$ for every $n\geq1$. From this
point on one can use the proof of \cite[Section 3]{LT12} with $X$
replaced by $X\times Z$ verbatim.

Suppose that there exists an embedding $f:(X\times Z,T\times S)\to(K^{\mathbb{Z}},\sigma)$.
Let $d'$ be a distance on $K^{\mathbb{Z}}$. There exists $\varepsilon>0$
such that if $x,y\in X\times Z$ satisfy $d'(f(x),f(y))<\varepsilon$
then $\dist(x,y)<1/2$. Then, for any $N\geq1$, ${d'}_{0}^{N-1}(f(x),f(y))<\varepsilon$
implies $\dist_{0}^{N-1}(x,y)<1/2$.

We can take $L=L(\varepsilon)>0$ such that if $x,y\in K^{\mathbb{Z}}$
satisfy $x_{n}=y_{n}$ $(-L\leq n\leq L)$ then $d'(x,y)<\varepsilon$.
This implies that, for any $N\geq1$, if $x,y\in K^{\mathbb{Z}}$
satisfies $x_{n}=y_{n}$ $(-L\leq n\leq N+L-1)$ then we have ${d'}_{0}^{N-1}(x,y)<\varepsilon$.
Let $\pi_{[-L,N+L-1]}:K^{\mathbb{Z}}\to K^{\{-L,-L+1,\dots,N+L-1\}}$,
$x\mapsto(x_{n})_{-L\leq n\leq N+L-1}$, be the projection. Then,
from the above discussions, $\pi_{[-L,N+L-1]}\circ f:(X\times Z,\dist_{0}^{N-1})\to K^{\{-L,-L+1,\dots,N+L-1\}}$
is a $(1/2)$-embedding. Since there exists a distance-increasing
continuous map from $(Y^{c_{n}},d_{\ell^{\infty}})$ to $(X\times Z,\dist_{0}^{b_{n}-1})$,
we conclude that there exists a $(1/2)$-embedding from $(Y^{c_{n}},d_{\ell^{\infty}})$
to $K^{\{-L,-L+1,\dots,b_{n}+L-1\}}=[0,1]^{D(b_{n}+2L)}$ for every
$n\geq1$. Applying Lemma \ref{lemma: property of Y} to this situation,
we get $2c_{n}\leq D(b_{n}+2L)$. Hence
\[
c_{n}-\frac{Db_{n}}{2}\leq LD.
\]
This contradicts the condition (ii): $\lim_{n\to\infty}(c_{n}-Db_{n}/2)=\infty$.
\end{proof}

\bibliographystyle{alpha}
\bibliography{C:/Users/yonatan/Documents/Math/Bib/universal_bib}

\vspace{0.5cm}

\address{Yonatan Gutman, Institute of Mathematics, Polish Academy of Sciences,
ul. \'{S}niadeckich~8, 00-956 Warszawa, Poland.}

\textit{E-mail address}: \texttt{y.gutman@impan.pl}

\vspace{0.5cm}

\address{Masaki Tsukamoto \endgraf Department of Mathematics, Kyoto University,
Kyoto 606-8502, Japan}

\textit{E-mail address}: \texttt{tukamoto@math.kyoto-u.ac.jp}
\end{document}